\theoremstyle{plain}
\newtheorem{theorem}{Theorem}[section]
\newtheorem{corollary}[theorem]{Corollary}
\newtheorem{proposition}[theorem]{Proposition}
\newtheorem{lemma}[theorem]{Lemma}
\newtheorem*{theorem*}{Theorem}
\newtheorem*{lemma*}{Lemma}
\newtheorem*{proposition*}{Proposition}
\newtheorem*{corollary*}{Corollary}
\theoremstyle{definition}
\newtheorem{example}[theorem]{Example}
\newtheorem{remark}[theorem]{Remark}
\newtheorem*{definition*}{Definition}
\newtheorem*{example*}{Example}
\newtheorem*{remark*}{Remark}
\title{On the multiple recurrence properties for disjoint systems}
\begin{document}

\author{Michihiro Hirayama\\
\small Department of Mathematics, University of Tsukuba,  Japan\\
\small e-mail: hirayama@math.tsukuba.ac.jp
\and 
Dong Han Kim\\
\small Department of Mathematics Education, Dongguk University-Seoul, Korea\\
\small e-mail: kim2010@dongguk.edu
\and 
Younghwan Son\\
\small Department of Mathematics,  POSTECH, Korea \\
\small e-mail: yhson@postech.ac.kr}

\maketitle

\begin{abstract}
We consider mutually disjoint family of measure preserving transformations $T_1, \cdots, T_k$ on a probability space $(X, \mathcal{B}, \mu)$. 
We obtain the multiple recurrence property of $T_1, \cdots, T_k$ and this result is utilized to derive multiple recurrence of Poincar\'e type in metric spaces. 
We also present multiple recurrence property of Khintchine type. 
Further, we study multiple ergodic averages of disjoint systems and we show that $T_1, \cdots, T_k$ are uniformly jointly ergodic if each $T_i$ is ergodic. 
\end{abstract}
\renewcommand{\thefootnote}{\fnsymbol{footnote}} 
\footnotetext{2010 \emph{Mathematics Subject Classification.} 37A05, 28D05}  
\footnotetext{\emph{Key words and phrases.} multiple recurrence, disjoint system, syndetic set}  
\renewcommand{\thefootnote}{\arabic{footnote}} 
\section{Introduction}

One of the fundamental properties in the ergodic theory of dynamical systems is the recurrence property. 
For every probability measure preserving system $(X,\mathcal{B} ,\mu ,T)$, the Poincar\'e recurrence theorem states that for every $A\in \mathcal{B} $ with $\mu (A)>0$, the set 
\begin{equation} \label{poincare}
\left\{ n\in \mathbb{N} \colon \mu (A\cap T^{-n}(A))>0 \right\}
\end{equation}
is infinite. 
Furstenberg \cite{Furstenberg1977} proved his multiple recurrence result: for any set $A\in \mathcal{B} $ with $\mu (A)>0$ and a given positive integer $k$, the set 
\begin{equation} \label{furstenberg}
\left\{ n\in \mathbb{N} \colon \mu \left( A\cap T^{-n}(A)\cap T^{-2n}(A)\cap \dots \cap T^{-kn}(A)\right) >0 \right\}
\end{equation}
is infinite. 
Subsequently, Furstenberg and Katznelson \cite{Furstenberg-Katznelson1978} showed a commuting version of the multiple recurrence theorem. 
Let $T_1,\dots, T_k$ be commuting measure preserving transformations on $(X,\mathcal{B} ,\mu )$. 
Then it is proved in \cite{Furstenberg-Katznelson1978} that for any set $A\in \mathcal{B} $ with $\mu (A)>0$, there is some $c=c(A)>0$ such that 
\begin{equation} \label{furstenberg-katznelson}
\left\{ n\in \mathbb{N} \colon \mu \left( A\cap T_1^{-n}(A)\cap T_2^{-n}(A)\cap \dots \cap T_k^{-n}(A)\right) >c \right\}
\end{equation}
is syndetic. 
Here we recall that a subset $E\subset \mathbb{N} $ is said to be \emph{syndetic} if it has bounded gaps, that is, there is a positive integer $K$ such that $E\cap \{ n,n+1,\dots ,n+K-1\} \neq \emptyset $ for every $n\in \mathbb{N} $. 
Such sets are sometimes called \emph{relatively dense}. 

In this paper, we study several aspects of multiple recurrence for disjoint systems. 
%
%
%
%
%
%\subsection{Results} 

%We start with the following result which shows the set of the form in \eqref{furstenberg-katznelson} is syndetic for disjoint systems. 
%In other words, we prove the analogue of the Furstenberg-Katznelson multiple recurrence for disjoint systems. 
%
%We also show several related multiple recurrence results for disjoint systems as follows.
%
%
%
%
%
\subsection{Multiple recurrence of Khintchine type} 

The Khintchine recurrence theorem gives a quantitative improvement of the Poincar\'e recurrence \eqref{poincare}. 
Namely, Khintchine \cite{Khintchine1935} showed that for every $A\in \mathcal{B} $ with $\mu (A)>0$ and $\varepsilon >0$, the set 
\begin{equation*} \label{single_khintchine}
\left\{ n\in \mathbb{N} \colon \mu (A\cap T^{-n}(A))>\mu (A)^2-\varepsilon \right\}
\end{equation*}
is syndetic. 
It is therefore natural to ask whether a multiple recurrence result of Khintchine type can be established for \eqref{furstenberg} or \eqref{furstenberg-katznelson} in general, but it is not that straightforward. 

In \cite{Bergelson-Host-Kra2005}, Bergelson, Host and Kra proved the following results. 
Let $(X,\mathcal{B} ,\mu ,T)$ be an invertible ergodic system. 
Then for every $A\in \mathcal{B} $ with $\mu (A)>0$ and $\varepsilon >0$, the sets 
\begin{equation} \label{BHK_T-T^2}
\left\{ n\in \mathbb{Z} \colon \mu \left( A\cap T^{-n}(A)\cap T^{-2n}(A)\right) >\mu (A)^3-\varepsilon \right\}
\end{equation}
and 
\[
\left\{ n\in \mathbb{Z} \colon \mu \left( A\cap T^{-n}(A)\cap T^{-2n}(A)\cap T^{-3n}(A)\right) >\mu (A)^4-\varepsilon \right\}
\] 
are syndetic (\cite{Bergelson-Host-Kra2005}*{Theorem 1.2}), while 
\begin{itemize}
\item \cite{Bergelson-Host-Kra2005}*{Theorem 1.3} there exists an invertible ergodic system $(X,\mathcal{B} ,\mu ,T)$ such that for every $l\in \mathbb{N} $, there is $A=A_l\in \mathcal{B} $ with $\mu (A)>0$ such that 
\[
\mu \left( A\cap T^{-n}(A)\cap T^{-2n}(A)\cap T^{-3n}(A)\cap T^{-4n}(A)\right) \leq \mu (A)^l/2,
\] 
for every $n\in \mathbb{Z} \setminus \{ 0\} $,
\item \cite{Bergelson-Host-Kra2005}*{Theorem 2.1} there exists an invertible non-ergodic system $(X,\mathcal{B} ,\mu ,T)$ such that for every $l\in \mathbb{N} $, there is $A=A_l\in \mathcal{B} $ with $\mu (A)>0$ such that 
\[
\mu \left( A\cap T^{-n}(A)\cap T^{-2n}(A)\right) \leq \mu (A)^l/2
\]
for every $n\in \mathbb{Z} \setminus \{ 0\} $. 
\end{itemize}
It follows, rather surprisingly,  that one can not have a multiple analogue of the Khintchine recurrence result for the family $\{ T,\dots, T^k\} $ with $k\geq 4$ in general, and that ergodicity is a necessary condition for $k = 2$ and $3$ while it is not needed for the Khintchine (single) recurrence. 

For two commuting measure preserving transformations, Chu \cite{Chu2011} proved the following result. 
Let $T_1$ and $T_2$ be commuting measure preserving transformations on $(X,\mathcal{B} ,\mu )$. 
Assume that the system $(X,\mathcal{B} ,\mu ,T_1,T_2)$ is ergodic with respect to the measure preserving action of the group generated by $T_1$ and $T_2$. 
Then for every $A\in \mathcal{B} $ with $\mu (A)>0$ and $\varepsilon >0$, the sets 
\begin{equation} \label{C_T_1-T_2}
\left\{ n\in \mathbb{Z} \colon \mu \left( A\cap T_1^{-n}(A)\cap T_2^{-n}(A)\right) >\mu (A)^{4}-\varepsilon \right\}
\end{equation}
is syndetic (\cite{Chu2011}*{Theorem 1.1}),
while 
\begin{itemize}  
\item \cite{Chu2011}*{Theorem 1.2}
for any $c\in (0,1]$, there exists a commuting ergodic measure preserving system $(X,\mathcal{B} ,\mu ,T_1,T_2)$, and $A\in \mathcal{B} $ with $\mu (A)>0$ such that 
\[
\mu \left( A\cap T_1^{-n}(A)\cap T_2^{-n}(A)\right) \leq c\mu (A)^{3}
\]
for every $n\in \mathbb{Z} \setminus \{ 0\} $,
\end{itemize}

Notice that the exponent of $\mu (A)$ is three in \eqref{BHK_T-T^2} for $T$ and $T^2$, while it is four in \eqref{C_T_1-T_2} for two commuting measure preserving transformations in general, and it can not be reduced to three. 
In fact, Donoso and Sun \cite{Donoso-Sun2018}*{Theorem 1.2} determined the best exponent is four for two commuting measure preserving transformations such that the group generated by the two of them acts ergodically on $X$. 
They \cite{Donoso-Sun2018} also showed that there exists a commuting ergodic measure preserving system $(X,\mathcal{B} ,\mu ,T_1,T_2,T_3)$ such that for every $l\in \mathbb{N} $, there is $A=A_l\in \mathcal{B} $ with $\mu (A)>0$ such that 
\[
\left\{ n\in \mathbb{Z} \colon \mu \left( A\cap T_1^{-n}(A)\cap T_2^{-n}(A)\cap T_3^{-n}(A)\right) \leq \mu (A)^l\right\} =\mathbb{Z} \setminus \{ 0\} .
\]

% For ergodic disjoint systems, 
 We prove the following result which will give another aspect for a multiple recurrence of Khintchine type. 
 (See Section~\ref{preliminaries} for the notion of standard Borel probability space and disjointness.)

\begin{theorem} \label{mrkhintchine_disjoint_general}
Let $(X,\mathcal{B} ,\mu )$ be a standard Borel probability space. 
Given $k\in \mathbb{N}$, let $T_0,T_1,\dots ,T_k$ be measure preserving transformations on $X$. 
Suppose that $T_0,T_1,\dots ,T_k$ are mutually disjoint. 
Then for every $\varepsilon >0$ and every $A\in \mathcal{B} $ with $\mu (A)>0$, the set 
\[
\left\{ n\in \mathbb{N} \colon \mu \left( T_0^{-n}(A)\cap T_1^{-n}(A)\cap \dots \cap T_k^{-n}(A)\right) >\mu (A)^{k+1}-\varepsilon \right\} 
\]
is syndetic. 
\end{theorem}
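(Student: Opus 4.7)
The plan is to deduce this Khintchine-type bound directly from the uniform joint ergodicity of mutually disjoint systems, which is the paper's main ergodic-theoretic result (announced in the abstract). The scheme has two steps: first identify the Cesaro limit of the correlation $\mu(T_0^{-n}A\cap\cdots\cap T_k^{-n}A)$ as $\mu(A)^{k+1}$ with \emph{uniform} convergence over shifted windows, then deduce the syndetic conclusion via the standard averaging trick.

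For the first step, set
\[
a_n := \mu\bigl(T_0^{-n}(A)\cap\cdots\cap T_k^{-n}(A)\bigr) = \int_X \prod_{i=0}^{k} (\mathbf{1}_A \circ T_i^n)\, d\mu.
\]
Mutual disjointness (in the Furstenberg sense made precise in Section~\ref{preliminaries}) forces each $T_i$ to be ergodic, so the uniform joint ergodicity theorem for disjoint families applies with $f_0=\cdots=f_k=\mathbf{1}_A$. Integrating against $\mu$ gives
\[
\lim_{N\to\infty} \frac{1}{N}\sum_{n=M+1}^{M+N} a_n = \prod_{i=0}^{k}\int \mathbf{1}_A\, d\mu = \mu(A)^{k+1},
\]
uniformly in $M \geq 0$.

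For the second step, fix $\varepsilon>0$. By the uniform convergence there exists $L\in\mathbb{N}$ such that $\frac{1}{L}\sum_{n=M+1}^{M+L} a_n > \mu(A)^{k+1}-\varepsilon$ for every $M\geq 0$. If the set $E_\varepsilon := \{n\in\mathbb{N} : a_n > \mu(A)^{k+1}-\varepsilon\}$ were disjoint from some window $[M+1,M+L]$, every $a_n$ in that window would satisfy $a_n \leq \mu(A)^{k+1}-\varepsilon$, forcing the average over the window to be at most $\mu(A)^{k+1}-\varepsilon$, a contradiction. Hence $E_\varepsilon$ meets every window of length $L$ and is therefore syndetic, which is exactly the claim.

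The main obstacle is thus concentrated in the uniform joint ergodicity result for mutually disjoint transformations, which is established in a later section of the paper; once that is in hand, the deduction above is genuinely short. The only care points in the reduction are (i) verifying that the paper's notion of mutual disjointness yields the \emph{full} product $\prod_i \int f_i\, d\mu$ (not merely a partial product arising from pairwise disjointness), and (ii) using the uniform-in-$M$ form of the Cesaro convergence rather than plain convergence of $\frac{1}{N}\sum_{n=1}^N a_n$, since the latter alone does not force the sequence to be syndetically close to its limit.
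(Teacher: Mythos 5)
Your overall scheme (uniform Ces\`aro convergence of the correlation sequence, followed by the standard averaging trick to extract syndeticity) is exactly the paper's scheme: the second step is Proposition~\ref{functional_kmr_disjoint_general}, and your windowed-average argument there is correct. The gap is in how you justify the first step. You claim that mutual disjointness forces \emph{each} $T_i$ to be ergodic and then invoke the uniform joint ergodicity theorem (Theorem~\ref{multiple_uniform_average_disjoint}), which does assume ergodicity of every $T_i$. That claim is false: disjointness of a pair only forces \emph{at least one} of the two to be ergodic (two non-ergodic systems always admit a non-product joining lifted from their nontrivial invariant factors), so among $T_0,\dots,T_k$ at most one may be non-ergodic --- but one genuinely may be. Indeed the paper's main intended instance of Theorem~\ref{mrkhintchine_disjoint_general} is $T_0={\rm id}_X$ together with ergodic mutually disjoint $T_1,\dots,T_k$ (Lemma~\ref{disjointness_criterion_ergodic_id} and the derivation of \eqref{mrkhintchine_disjoint}), and ${\rm id}_X$ is non-ergodic on any nontrivial space. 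In that case Theorem~\ref{multiple_uniform_average_disjoint} is not applicable (its proof needs ergodicity of the product $T_0\times\cdots\times T_k$ via the Berend--Bergelson criterion), so your reduction breaks precisely where the theorem is meant to be used.

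The repair is to note that the \emph{integrated} statement
\[
\lim_{N-M\to\infty}\frac{1}{N-M}\sum_{n=M}^{N-1}\int_X\prod_{i=0}^{k}T_i^n f_i\,d\mu=\prod_{i=0}^{k}\int_X f_i\,d\mu
\]
holds under mutual disjointness alone, with no ergodicity hypothesis; this is Theorem~\ref{unif_averag_mr_correlation_seq_disjoint}. The paper proves it by passing to a regular (compact metrizable) model and showing that every weak* limit of the empirical measures $\lambda_{M,N}$ of \eqref{unif_joining} is a joining of $S_0,\dots,S_k$ (Lemma~\ref{joinning}), hence equals $\nu^{\otimes(k+1)}$ by disjointness, together with an $L^1$-approximation of the $f_i$ by continuous functions. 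If you replace your appeal to uniform joint ergodicity by this joining argument, the rest of your proof goes through unchanged.
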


Notice that the commutativity condition of the system is not necessary in Theorem \ref{mrkhintchine_disjoint_general}. 
For non-disjoint systems, Theorem \ref{mrkhintchine_disjoint_general} does not hold in general (see Examples \ref{failure_nondisjoint_commutative} and \ref{failure_ergodic_nondisjoint_noncommutative} below). 
%For non-disjoint systems, Theorem \ref{mrkhintchine_disjoint_general} does not hold in general (see Example \ref{failure_nondisjoint_commutative}).  
If $T_1,\dots ,T_k$ are ergodic and mutually disjoint, % measure preserving transformations on $X$. 
then ${\rm id}_X,T_1,\dots ,T_k$ are also mutually disjoint by Lemma \ref{disjointness_criterion_ergodic_id} below. 
%Applying Theorem \ref{mrkhintchine_disjoint_general} with $T_0={\rm id}_X$ yields the result. 
%\begin{theorem} \label{mrkhintchine_disjoint}
%Let $(X,\mathcal{B} ,\mu )$ be a standard Borel probability space, and
%Suppose that $T_1,\dots ,T_k$ are mutually disjoint. 
Therefore, for ergodic mutually disjoint $T_1,\dots ,T_k$, 
%for every $\varepsilon >0$ and every $A\in \mathcal{B} $ with $\mu (A)>0$, 
the set 
\begin{equation}\label{mrkhintchine_disjoint}
\left\{ n\in \mathbb{N} \colon \mu \left( A\cap T_1^{-n}(A)\cap T_2^{-n}(A)\cap \dots \cap T_k^{-n}(A)\right) >\mu (A)^{k+1}-\varepsilon \right\} 
\end{equation}
is syndetic.
%\end{theorem}

%In other words, under the assumption of the ergodicity, we can take $c=\mu (A)^{k+1}-\varepsilon $ in Theorem \ref{recurrence:new}.
%We can obtain even the following more general form. 

%The following result which shows the set of the form in \eqref{furstenberg-katznelson} is syndetic for disjoint systems. 
%In other words, 
After weakening the bound of \eqref{mrkhintchine_disjoint},  we show the analogue of the Furstenberg-Katznelson multiple recurrence for disjoint systems without assuming of the ergodicity.
In other words, the set of the form in \eqref{furstenberg-katznelson} is syndetic for disjoint systems. 

\begin{theorem}\label{recurrence:new}
Let $(X,\mathcal{B} ,\mu )$ be a standard Borel probability space, and let $T_1,\dots ,T_k$ be measure preserving transformations on $X$. 
Suppose that $T_1,\dots ,T_k$ are mutually disjoint. 
Then for every $A\in \mathcal{B} $ with $\mu (A)>0$, there is $c=c(A)>0$ such that the set 
\[
\left\{ n \in \mathbb{N} \colon \mu \left( A \cap T_1^{-n}(A) \cap T_2^{-n}(A)\cap \dots \cap T_k^{-n}(A)\right) > c \right\}
\]
is syndetic.
\end{theorem}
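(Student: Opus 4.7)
My approach is to establish a generalized joint-ergodicity identity for mutually disjoint systems via the method of joinings, and then to extract syndeticity from the uniform positivity of the resulting limit.

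The first step is to show that for any bounded measurable $f_1,\dots,f_k$ and uniformly in $M\ge 0$,
\[
\lim_{N\to\infty}\frac{1}{N}\sum_{n=M+1}^{M+N}(f_1\circ T_1^n)\cdots(f_k\circ T_k^n) \;=\; \prod_{i=1}^k E[f_i\mid\mathcal{I}_{T_i}] \quad \text{in } L^2(\mu),
\]
where $\mathcal{I}_{T_i}$ denotes the $T_i$-invariant $\sigma$-algebra. Any weak-$*$ accumulation point $\lambda$ of the Ces\`aro-averaged pushforwards $N^{-1}\sum_{n=1}^N(\mathrm{id},T_1^n,\dots,T_k^n)_*\mu$ on $X^{k+1}$ is an $(\mathrm{id}\times T_1\times\cdots\times T_k)$-invariant probability measure, and its projection to the last $k$ coordinates is a joining of the $T_i$'s, hence equals $\mu^{\otimes k}$ by mutual disjointness. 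Disintegrating $\lambda$ over the first coordinate and identifying the marginals of the fibers with the conditional measures $\mu_x^{(i)}=\mu(\,\cdot\mid\mathcal{I}_{T_i})(x)$ then yields the displayed identity, which in the ergodic case reduces to the joint ergodicity statement mentioned in the paper's abstract.

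Specializing to $f_i=\mathbf{1}_A$ and integrating against $\mathbf{1}_A$ gives, uniformly in $M$,
\[
\lim_{N\to\infty}\frac{1}{N}\sum_{n=M+1}^{M+N}\mu\bigl(A\cap T_1^{-n}A\cap\cdots\cap T_k^{-n}A\bigr) \;=\; \int_A \prod_{i=1}^k E[\mathbf{1}_A\mid\mathcal{I}_{T_i}]\,d\mu \;=:\; c_0.
\]
Positivity of $c_0$ is elementary: each set $B_i:=\{E[\mathbf{1}_A\mid\mathcal{I}_{T_i}]=0\}$ lies in $\mathcal{I}_{T_i}$, so $\mu(A\cap B_i)=\int_{B_i}\mathbf{1}_A\,d\mu=\int_{B_i}E[\mathbf{1}_A\mid\mathcal{I}_{T_i}]\,d\mu=0$; thus the integrand is strictly positive $\mu$-a.e.\ on $A$ and $c_0>0$. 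Uniform convergence of the Ces\`aro averages to a positive limit $c_0$ then forces the set $\{n:\mu(A\cap T_1^{-n}A\cap\cdots\cap T_k^{-n}A)>c_0/2\}$ to be syndetic: were there a window of length $K$ contained in its complement, the shifted Ces\`aro average over that window would be at most $c_0/2$, contradicting the uniform limit once $K$ exceeds the uniformity threshold.

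The hardest step is verifying the generalized joint-ergodicity identity in the absence of individual ergodicity of the $T_i$'s: specifically, one must show that the disintegrated fiber measure $\eta_x$ of $\lambda$ on $X^k$ coincides $\mu$-a.e.\ with the product $\bigotimes_{i=1}^k \mu_x^{(i)}$ of its coordinate marginals. This requires a fiberwise/relative form of the disjointness hypothesis rather than just its global form, and constitutes the main technical content of the proof.
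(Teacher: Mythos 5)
Your overall architecture --- show that the uniform Ces\`aro averages of $\mu(A\cap T_1^{-n}A\cap\cdots\cap T_k^{-n}A)$ tend to a positive constant and then extract syndeticity by the shifted-window argument --- is sound, and that last step coincides with how the paper concludes (the contrapositive argument of Proposition~\ref{functional_kmr_disjoint_general}). Your positivity argument for $c_0=\int_A\prod_iE[\mathbbm{1}_A\mid\mathcal{I}_{T_i}]\,d\mu$ is also correct. The problem is that everything rests on the identity $\frac1{N-M}\sum_{n=M}^{N-1}\prod_i f_i\circ T_i^n\to\prod_iE[f_i\mid\mathcal{I}_{T_i}]$, and you do not prove it: you explicitly defer ``the main technical content'' to the unverified claim that the disintegrated fibre measure $\eta_x$ of the limit joining is the product of its coordinate marginals. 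That is a genuine gap, not a routine verification. Global disjointness only yields $\int\eta_x\,d\mu(x)=\mu^{\otimes k}$, which does not by itself force each $\eta_x$ to be a product. Closing the gap requires additional structural input, for instance: that among mutually disjoint transformations at most one can fail to be ergodic (two transformations with nontrivial invariant factors always admit a non-product joining, relatively independent over a non-product coupling of those identity factors); that mutual disjointness of $T_1,\dots,T_k$ makes $T_1$ disjoint from the ergodic product system $T_2\times\cdots\times T_k$; and that disjointness from an ergodic system is inherited by almost every ergodic component. None of this appears in your writeup, so the key limit formula remains an assertion. (You would also need to justify upgrading weak-$*$ convergence of the pushforward measures on a compact model to the asserted $L^2$-norm convergence, although for this theorem the scalar pairing against $\mathbbm{1}_A$ would suffice.)

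The paper sidesteps all of this. It proves only the scalar identity $\frac1{N-M}\sum_n\int\prod_iT_i^nf_i\,d\mu\to\prod_i\int f_i\,d\mu$ (Theorem~\ref{unif_averag_mr_correlation_seq_disjoint}), and then, for each $i$, splits $\mathbbm{1}_A=f_i+g_i$ via the Jacobs--de Leeuw--Glicksberg decomposition for $T_i$. A van der Corput argument powered by that scalar identity (Lemma~\ref{avg:wm}) kills every term containing a weakly mixing component $g_i$, reducing the average to $\frac1{N-M}\sum_n\int_A\prod_iT_i^nf_i\,d\mu$ with each $f_i$ almost periodic and strictly positive a.e.\ on $A$ (Lemma~\ref{lem:add}); near-invariance of the $f_i$ on the syndetic Bohr set $\{n:\|T_i^nf_i-f_i\|_2<\delta\text{ for all }i\}$ then gives a positive lower density bound without ever identifying the limit. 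To salvage your route you must either supply the fibrewise disjointness argument sketched above or retreat, as the paper does, to a decomposition that needs only the scalar averaged identity.
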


\begin{remark} 
For two commuting measure preserving transformations, the result analogous to Theorem \ref{mrkhintchine_disjoint_general} can be obtained easily.  
Indeed, let $T_0$ and $T_1$ be commuting invertible measure preserving transformations. 
Then 
\[
\mu \left( T_0^{-n}(A)\cap T_1^{-n}(A)\right) =\mu \left( A\cap (T_0^{-1}T_1)^{-n}(A)\right) ,
\]
holds for every $A\in \mathcal{B} $, and hence the set 
\[
\left\{ n\in \mathbb{N} \colon \mu \left( T_0^{-n}(A)\cap T_1^{-n}(A)\right) >\mu (A)^{2}-\varepsilon \right\} 
\]
is syndetic by the Khintchine (single) recurrence theorem for $T_0^{-1}T_1$. 
When $T_0$ and $T_1$ are non-invertible, by using the natural extension $\pi \colon (\widetilde{X} ,\widetilde{\mathcal{B}}, \widetilde{\mu }, \widetilde{T_0},\widetilde{T_1})\to (X,\mathcal{B} ,\mu ,T_0,T_1)$, one can show the same result. 
(See \cite{Furstenberg1981} for the natural extension of commuting systems.) 
While for three  commuting measure preserving transformations, the result analogous to Theorem \ref{mrkhintchine_disjoint_general} does not hold in general (see Example \ref{failure_nondisjoint_commutative}). 
\end{remark}
\subsection{$L^2$-convergence of multiple ergodic averages} 

Given measure preserving transformations $T_1,\dots ,T_k$ on a probability space $(X,\mathcal{B} ,\mu )$, the multiple recurrence \eqref{furstenberg-katznelson} relates to the $L^2$-convergence of the  following multiple ergodic averages 
\[
\frac{1}{N} \sum _{n=0}^{N-1}f_1\circ T_1^{n}\cdot f_2\circ T_2^{n}\cdot \cdots \cdot f_k\circ T_k^{n}=\frac{1}{N} \sum _{n=0}^{N-1}T_1^{n}f_1\cdot T_2^{n}f_2\cdot\cdots \cdot T_k^{n}f_k
\]
for $f_i\in L^\infty (X,\mu )$, $i\in \{ 1,\dots ,k\} $. 
Here and below, for a measure preserving transformation $S$ on $X$, we will denote an operator for functions $f$ on $X$ by $Sf=f\circ S$.
Note that the case $k=1$ is the von Neumann ergodic theorem, hence the $L^2$-limit is given by the orthogonal projection onto the invariant factor. 

Host and Kra \cite{Host-Kra2005}, and independently Ziegler \cite{Ziegler2007} proved the following result. 
Let $(X,\mathcal{B} ,\mu ,T)$ be an invertible probability measure preserving system. 
Then for every $f_i\in L^\infty (X,\mu )$, $i\in \{ 1,\dots ,k\} $, the limit
\begin{equation} \label{multiple_ergodic_average}
\lim _{N\to \infty }\frac{1}{N} \sum _{n=0}^{N-1}T^{n}f_1\cdot T^{2n}f_2\cdot \cdots \cdot T^{kn}f_k
\end{equation}
exists in $L^2(X,\mu )$. 
Subsequently, Tao \cite{Tao2008} proved the $L^2$-convergence for commuting transformations, or more precisely, for  commuting measure preserving transformations $T_1,\dots ,T_k$, the limit 
\begin{equation} \label{multiple_ergodic_average_commute}
\lim _{N\to \infty }\frac{1}{N} \sum _{n=0}^{N-1}T_1^{n}f_1\cdot T_2^{n}f_2\cdot\cdots \cdot T_k^{n}f_k
\end{equation}
exists in $L^2(X,\mu )$. 
Soon after, Towsner \cite{Towsner2009} gave a different proof by using nonstandard analysis, and Austin \cite{Austin2010} gave an ergodic proof.  
In fact, Austin \cite{Austin2010} proved more general results: the $L^2$-convergence of uniform averages over a F{\o}lner sequence.  
See also \cite{Host2009}. 
By way of contrast, Bergelson and Leibman \cite{Bergelson-Leibman2002} showed that the $L^2$-limit (7) of the (double) ergodic averages does not always exist if the group generated by $T_1$ and $T_2$ is not nilpotent, while the limit of ergodic averages exists if $T_1$ and $T_2$ generate a nilpotent group \cite{Bergelson-Leibman2002}*{Theorem A}. 
The study of $L^2$-convergence of more general multiple ergodic averages along the orbits of measure preserving action by a nilpotent group culminated in the remarkable result of Walsh \cite{Walsh2012}:
let $G$ be a nilpotent group of measure preserving transformations on $(X, \mathcal{B}, \mu)$. Then, for every $T_1, \dots, T_k \in G$, the following ergodic averages
\[ \frac{1}{N} \sum_{n=1}^N \prod_{j=1}^d \left( T_1^{p_{1,j} (n)} \cdots T_k^{p_{k,j} (n)} \right) f_j \]
converges in $L^2$ for every $f_1, \dots, f_d \in L^{\infty}$ and every set of integer valued polynomials $p_{i,j}(n)$. 

It is known that if $T$ is a weakly mixing transformation, then the $L^2$-limit \eqref{multiple_ergodic_average} is given by 
\[
\lim _{N - M \to \infty }\frac{1}{N-M} \sum _{n=M}^{N-1}T^{n}f_1\cdot T^{2n}f_2\cdot \cdots \cdot T^{kn}f_k=\prod _{i=1}^k\int _Xf_i\, d\mu . 
\]
See \cite{Furstenberg1977}, \cite{Berend-Bergelson1984}*{Corollary 3.1}, and \cite{Furstenberg-Katznelson-Ornstein1982}. 
Note, however, that the $L^2$-limit \eqref{multiple_ergodic_average} needs not be constant for general ergodic systems. 
Here, we obtain the following result. 

\begin{theorem} \label{multiple_uniform_average_disjoint}
Let $(X,\mathcal{B} ,\mu )$ be a standard Borel probability space. 
Suppose that $T_1,\dots ,T_k$ are ergodic measure preserving transformations on $X$ such that $T_1, T_2, \dots, T_k$ are mutually disjoint. 
Then for $f_i\in L^\infty (X,\mu )$, $i\in \{ 1,\dots ,k\} $, 
\[
\lim _{N-M \to \infty }\frac{1}{N-M} \sum _{n=M}^{N-1}T_1^{n}f_1\cdot T_2^{n}f_2\cdot\cdots \cdot T_k^{n}f_k=\prod _{i=1}^k\int _Xf_i\, d\mu 
\]
in $L^2(X,\mu )$. 
\end{theorem}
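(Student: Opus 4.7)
The plan is to decompose the desired $L^2$ convergence into weak convergence and norm convergence. Let $A_{M,N} := \frac{1}{N-M}\sum_{n=M}^{N-1}\prod_{i=1}^k T_i^n f_i$ and $c := \prod_{i=1}^k \int f_i \, d\mu$. In a Hilbert space, weak convergence $A_{M,N} \rightharpoonup c$ combined with convergence of norms $\|A_{M,N}\|_2 \to |c|$ implies strong convergence $A_{M,N} \to c$ in $L^2$; I will establish each half separately.

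For the weak convergence, I use that the ergodicity of each $T_i$ makes $\mathrm{id}_X, T_1, \dots, T_k$ mutually disjoint by Lemma~\ref{disjointness_criterion_ergodic_id}. Consider the averaged diagonal pushforwards $\nu_{M,N} := \frac{1}{N-M}\sum_{n=M}^{N-1}(\mathrm{id}\times T_1\times\cdots\times T_k)^n_* \mu_\Delta$ on $X^{k+1}$, where $\mu_\Delta$ is the diagonal measure. These all have marginal $\mu$ on every coordinate, and any weak-$*$ subsequential limit is invariant under $\mathrm{id}\times T_1\times\cdots\times T_k$ and hence a joining of $\mathrm{id}_X, T_1, \dots, T_k$, which by mutual disjointness must equal $\mu^{k+1}$. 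Pairing with $\bar g \otimes f_1 \otimes \cdots \otimes f_k$ for $g \in L^\infty$ then yields $\langle A_{M,N}, g \rangle \to \overline{\int g\, d\mu} \cdot c = \langle c, g\rangle$, and the usual $L^\infty$-density plus uniform $L^2$-boundedness of $A_{M,N}$ extends this to arbitrary $g \in L^2$.

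For the norm, expand $\|A_{M,N}\|_2^2$, substitute $h = m - n$, and observe that for each fixed $h$ the same joining argument applied to $g_i^{(h)} := (T_i^h f_i)\bar f_i$ gives $\frac{1}{N-M}\sum_n \int \prod_i T_i^n g_i^{(h)}\, d\mu \to \prod_i \langle T_i^h f_i, f_i\rangle$. Standard Fej\'er-kernel Ces\`aro averaging then reduces $\|A_{M,N}\|_2^2 \to |c|^2$ to the identity
\[
\lim_{H \to \infty}\frac{1}{H}\sum_{h=1}^H \prod_{i=1}^k \langle T_i^h f_i, f_i\rangle \;=\; |c|^2.
\]
Decomposing $f_i = c_i + \tilde f_i$ with $c_i := \int f_i\,d\mu$, one has $\langle T_i^h f_i, f_i\rangle = |c_i|^2 + a_i^{(h)}$ with $a_i^{(h)} := \langle T_i^h \tilde f_i, \tilde f_i\rangle$; the pure-constant term in the expansion equals $|c|^2$, single-factor terms vanish in Ces\`aro by von Neumann's theorem applied to ergodic $T_i$ and mean-zero $\tilde f_i$, and multi-factor cross terms $\prod_{i\in S}a_i^{(h)}$ with $|S|\ge 2$ are handled via the spectral theorem: their Ces\`aro limits equal $\bigl(\prod_{i\in S}\sigma_{\tilde f_i}\bigr)\bigl(\bigl\{(z_i)_{i\in S} : \prod_{i\in S}z_i = 1\bigr\}\bigr)$, where $\sigma_{\tilde f_i}$ is the spectral measure of $\tilde f_i$ under $T_i$.

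The main obstacle is showing that these anti-diagonal spectral measures vanish. By Fubini this amounts to excluding eigenvalue relations $\prod_{i\in S}\lambda_i = 1$ with each $\lambda_i \ne 1$ a non-trivial eigenvalue of $T_i$: given such eigenvectors $\phi_i$ of $T_i$, the tensor $(x_i)_{i\in S} \mapsto \prod_{i\in S}\phi_i(x_i)$ would be a non-constant $\prod_{i\in S}T_i$-invariant function on $X^{|S|}$, and via the marginal-decomposition argument underlying Lemma~\ref{disjointness_criterion_ergodic_id} (together with the ergodicity of each $T_i$) would produce a non-trivial joining of $\{T_i\}_{i\in S}$, contradicting the mutual disjointness hypothesis. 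This is the one delicate step that genuinely uses disjointness beyond pairwise ergodicity; once it is established, the rest of the argument reduces to routine Ces\`aro bookkeeping.
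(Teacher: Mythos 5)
Your argument is essentially correct, but it takes a genuinely different route from the paper's. The paper proves this theorem in three lines by verifying the two conditions of the Berend--Bergelson criterion (Theorem~\ref{Berend-Bergelson_criterion}): ergodicity of the product $T_1\times\cdots\times T_k$ (from disjointness plus individual ergodicity) and convergence of the integrated correlations $\frac{1}{N-M}\sum_n\int\prod_iT_i^nf_i\,d\mu$ (Theorem~\ref{unif_averag_mr_correlation_seq_disjoint}), and then outsources the upgrade from weak to norm convergence to \cite{Berend-Bergelson1986}. You instead reprove that upgrade by hand: weak convergence via the joining argument with $\mathrm{id}_X$ adjoined (which is exactly Theorem~\ref{unif_averag_mr_correlation_seq_disjoint} with $T_0=\mathrm{id}_X$, legitimated by Lemma~\ref{disjointness_criterion_ergodic_id}), plus convergence of norms via a second-moment computation, the Radon--Riesz property, and a spectral analysis of the Kronecker parts. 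The two inputs are ultimately the same --- the correlation limits and the ergodicity of the product, which in your version appears as the exclusion of multiplicative eigenvalue relations $\prod_{i\in S}\lambda_i=1$ --- so nothing is gained in generality, but your proof is self-contained and makes visible exactly where disjointness is consumed; the paper's is shorter at the cost of a black box.

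Two steps need more care than you give them. First, your weak-convergence step pairs a weak$^*$ limit of the measures $\nu_{M,N}$ against the bounded measurable function $\bar g\otimes f_1\otimes\cdots\otimes f_k$; weak$^*$ convergence only controls integrals of continuous functions, and $X$ is not even compact. This is precisely what the paper's regular-model construction and the $L^1$-approximation by continuous functions in Proposition~\ref{unif_averag_mr_correlation_seq_disjoint_regular} are for, so you should either reproduce that approximation or simply invoke Theorem~\ref{unif_averag_mr_correlation_seq_disjoint}. Second, the ``standard Fej\'er-kernel Ces\`aro averaging'' is not a free interchange of limits, since the convergence $\frac{1}{N-M}\sum_n\langle x_{n+h},x_n\rangle\to b_h$ is not uniform in $h$; the clean repair is to note that weak convergence already gives $\liminf\|A_{M,N}\|_2\geq|c|$ and to obtain the matching upper bound $\limsup\|A_{M,N}\|_2^2\leq\limsup_H\frac1H\sum_{h=1}^H\mathrm{Re}(b_h)=|c|^2$ from the uniform van der Corput inequality (Lemma~\ref{vdC}), which is how the paper handles the analogous point in Lemma~\ref{avg:wm}. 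With these two standard repairs your proof is complete.
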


Notice again that the commutativity condition of the system is not necessary in Theorem \ref{multiple_uniform_average_disjoint}. 
Pointwise convergence of multiple ergodic averages for the case of two disjoint transformations ($k=2$), were studied via joining method \cites{Lesigne-Rittaud-delaRue2003, delaRue2012}. Especially, it was shown that if $T_1, T_2$ are ergodic and disjoint, then they are pointwise jointly ergodic. (See \cite{Berend1985}*{Theorem 2.2}. See also \cites{Furstenberg1967, delaRue2012}.)
\subsection{Multiple recurrence of Poincar\'e type in metric spaces} 

When the underlying space $X$ admits a metric, we have another refinement of the Poincar\'e's Recurrence theorem. 
Namely, let $(X,\mathcal{B} ,\mu )$ be a Borel probability space with a compatible metric $d$ such that $(X,d)$ is separable, and let $T$ be a measure preserving transformation on $X$. 
Then 
\[ 
\liminf _{n\to \infty }d\left( x,T^n(x)\right) =0
\]
for $\mu $-almost every $x\in X$. 
See \cite{Furstenberg1981}*{Theorem 3.3} for instance. 

A multiple analogue of this result for commuting systems is given as follows. 
Let $T_1,\dots ,T_k$ be commuting measure preserving transformation on $(X,\mathcal{B} ,\mu )$. 
Then 
\[ 
\liminf _{n\to \infty }{\rm diam}\left\{ x,T_1^n(x),\dots ,T_k^n(x)\right\} =0
\]
for $\mu $-almost every $x\in X$. 
See \cite{Hirayama2019}*{Propositions 1.2 and 6.1} for instance. 

The following result shows another multiple analogue for disjoint systems.  
\begin{theorem} \label{metric_recurrence:new} 
Let $(X,\mathcal{B} ,\mu )$ be a standard Borel probability space with a compatible metric $d$, and let $T_1,\dots ,T_k$ be measure preserving transformations on $X$. 
Suppose that $T_1,\dots ,T_k$ are mutually disjoint. 
Then
\[
\liminf _{n\to \infty } {\rm diam} \left\{x, T_1^n(x),\dots ,T_k^n(x)\right\} =0
\]
for $\mu $-almost every $x\in X$. 
\end{theorem}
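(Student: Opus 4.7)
The plan is to reduce Theorem~\ref{metric_recurrence:new} to Theorem~\ref{recurrence:new} by a standard partitioning argument exploiting the separability of $(X,d)$.

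For each $\varepsilon>0$ and $N\in\mathbb{N}$, set
\[
F_{\varepsilon,N} := \left\{x\in X : \mathrm{diam}\{x,T_1^n(x),\dots,T_k^n(x)\}\geq\varepsilon \text{ for every } n\geq N\right\}.
\]
Each $F_{\varepsilon,N}$ is Borel, since $x\mapsto d(T_i^n x, T_j^n x)$ is Borel for all $i,j,n$. The desired conclusion $\liminf_{n\to\infty} \mathrm{diam}\{x,T_1^n(x),\dots,T_k^n(x)\}=0$ for $\mu$-a.e.\ $x$ is equivalent to $\mu(F_{\varepsilon,N})=0$ for every $\varepsilon>0$ and every $N\in\mathbb{N}$, because the set on which the liminf is positive is $\bigcup_{m,N} F_{1/m,N}$.

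Fix $\varepsilon>0$ and $N\in\mathbb{N}$. Since $(X,d)$ is separable, one can choose a countable Borel partition $\{A_i\}_{i\in\mathbb{N}}$ of $X$ with $\mathrm{diam}(A_i)<\varepsilon$ for every $i$, for instance by disjointifying the balls $B(x_j,\varepsilon/3)$ for a countable dense set $\{x_j\}$. Assume, for contradiction, that $\mu(F_{\varepsilon,N})>0$; then $\mu(F_{\varepsilon,N}\cap A_i)>0$ for some $i$. Put $B := F_{\varepsilon,N}\cap A_i$, which is Borel, has $\mu(B)>0$, and satisfies $\mathrm{diam}(B)\leq\mathrm{diam}(A_i)<\varepsilon$.

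Now apply Theorem~\ref{recurrence:new} to the set $B$: there is $c=c(B)>0$ such that
\[
S := \left\{n\in\mathbb{N} : \mu\left(B\cap T_1^{-n}(B)\cap\cdots\cap T_k^{-n}(B)\right)>c\right\}
\]
is syndetic, hence unbounded. Pick any $n\in S$ with $n\geq N$; there exists $x\in B$ with $T_j^n(x)\in B$ for all $j\in\{1,\dots,k\}$, so
\[
\mathrm{diam}\{x,T_1^n(x),\dots,T_k^n(x)\}\leq\mathrm{diam}(B)<\varepsilon.
\]
This contradicts $x\in B\subseteq F_{\varepsilon,N}$ together with $n\geq N$, and we conclude $\mu(F_{\varepsilon,N})=0$, finishing the plan.

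The essential content is supplied entirely by Theorem~\ref{recurrence:new}; there is no real obstacle beyond it. The only mild points to check are the existence of a countable Borel partition into $\varepsilon$-small pieces and the Borel measurability of the liminf-diameter function, both of which are immediate from $(X,d)$ being a separable metric space compatible with $\mathcal{B}$.
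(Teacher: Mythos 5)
Your proof is correct and follows essentially the same route as the paper: both arguments use separability to produce countably many Borel sets of diameter less than $\varepsilon$ and then invoke Theorem~\ref{recurrence:new} on any such set of positive measure to force a simultaneous return of $x, T_1^n(x),\dots ,T_k^n(x)$ into that small set. The only differences are cosmetic --- the paper packages the argument via the diagonal embedding $\iota\colon X\to\Delta\subset X^k$ and a countable cover, showing the non-returning sets $B_l$ are null, whereas you work directly on $X$ with a disjointified partition and a contradiction; the substance is identical.
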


In Section \ref{preliminaries}, we recall the notion of disjointness and gather the lemmas required in the proof. 
In Section \ref{poof}, we start by investigating the uniform averages of multiple correlation sequences, and then prove Theorems \ref{multiple_uniform_average_disjoint} and \ref{mrkhintchine_disjoint_general} in Section \ref{proof-1.6-1.3-1.2}. 
Finally, we prove Theorems \ref{recurrence:new} and \ref{metric_recurrence:new} in Section \ref{proof-1.1-1.5}.
\subsection{Examples} 

A typical example for which the results can be applied is the rotation on the circle $\mathbb{T} = \mathbb R / \mathbb Z $. 

\begin{example} 
Let $\omega _1,\dots ,\omega _k$ be irrational numbers so that they are rationally independent. 
Then the rotation $R_i(x) = x+ \omega_i$ on $\mathbb T$ is ergodic with respect to the Lebesgue measure on $\mathbb{T} $ for each $i\in \{ 1,\dots ,k\} $, and $R_i$ are disjoint. 
Therefore, by Theorem~\ref{metric_recurrence:new}, we have for almost every $x$
\[
\liminf _{n\to \infty } {\rm diam} \{ x,R_1^n(x), \dots ,R_k^n(x)\}  = 0.
\]
In fact, we can deduce a stronger result using Dirichlet's theorem, which states that
%By Dirichlet's thereom it is known that 
for any $N$ there exists $1 \le n \le N$ such that
\[
\max_{1 \le i \le k} \| n\omega _i\| < \frac{1}{N^{1/k}},
\] 
where $\| t \| = \min\{ | t-m | \, | \, m \in \mathbb Z\}$. 
Since
\[
{\rm diam} \{ x,R_1^n(x), \dots ,R_k^n(x)\} = {\rm diam} \{ x,x +n\omega_1, \dots ,x+n\omega_k\} \le 2\max_{1 \le i \le k} \| n\omega _i\|,
\]
we have for any $x$
\[
\liminf _{n\to \infty } n^{1/k} {\rm diam} \{ x,R_1^n(x), \dots ,R_k^n(x)\} 
%&= \liminf _{n\to \infty } n^{1/k} {\rm diam} \{ x,x +n\omega_1, \dots ,x+n\omega_k\} \\
 \le 2.
\]
For a related result of the multiple recurrence of irrational rotations, see \cite{Kim2009}.
\end{example}

%\begin{example} 
%Let $T\colon X\to X$ be a weakly mixing transformations with MSJ. 
%Then $T^k$ and $T^l$ are disjoint for every $k\neq l$. 
%See \cite{delJunco-Rudolph1987}*{Corollary 6.5}.
%\end{example}

An example of \cite{Furstenberg1981}*{p.40} shows that there is a non-disjoint and non-commutative system for which \eqref{mrkhintchine_disjoint} and Theorem \ref{recurrence:new} fail. 
It is outlined here for the sake of completeness. 

\begin{example} \label{failure_ergodic_nondisjoint_noncommutative}
Let $X=\{ 0,1\}^{\mathbb Z} $ and $(X,\mu ,S)$ be the two sided $(1/2,1/2)$-Bernoulli shift.
Define a ``flip" $\psi \colon X\ni x=(x_i)_{i\in \mathbb{Z} }\mapsto \psi (x)\in X$ by 
\[
(\psi (x))_i=\begin{cases} x_0, & i=0, \\ 1-x_i, & i\neq 0,\end{cases} 
\]
and then define $T\colon X\to X$ by $T=\psi ^{-1}\circ S\circ \psi $. 
We see $S\circ T\neq T\circ S$. 
Let $A\subset X$ be the cylinder set defined as 
\[
A=\{ x\in X\colon x_0=0\} .
\]
It is clear that $x\in A\cap S^{-n}(A)$ if and only if $x_0=x_n=0$. 
On the other hand, we see that any $x\in A\cap T^{-n}(A)$ needs to satisfy $x_0=(T^n(x))_0=0$. 
Note that $T^n=\psi ^{-1}\circ S^n\circ \psi $ and $\psi \circ \psi =\text{id} _X$. 
It follows that
\[
(T^n(x))_0=(\psi \circ S^n\circ \psi (x))_0=(S^n\circ \psi (x))_0=(\psi (x))_n=1-x_n
\]
for every $n\in \mathbb{Z} \setminus \{ 0\} $. 
Consequently, 
\[
A\cap S^{-n}(A)\cap T^{-n}(A)=\emptyset 
\]
for every $n\in \mathbb{Z} \setminus \{ 0\} $. 
Hence, for any $0 < \varepsilon < \mu (A)^{3}$
\[
\left\{ n\in \mathbb{N} \colon \mu \left( A\cap S^{-n}(A)\cap T^{-n}(A)\right) >\mu (A)^{3}-\varepsilon \right\} =\{ 0\} .
\]
Since both $S$ and $T$ have positive entropy, they cannot be disjoint (\cite{Furstenberg1967}*{Theorem I.1}). 
\end{example}

The following example, based on an example of \cite{Chu2011}*{Theorem 1.2}, shows that there is a non-disjoint, % ergodic
 and commutative system for which Theorem \ref{mrkhintchine_disjoint_general} fails. 

%\begin{example} \label{failure_nondisjoint_commutative}

%\end{example}

%Similarly, one can construct a non-disjoint and commutative example for which Theorem \ref{mrkhintchine_disjoint_general} fails. 
\begin{example} \label{failure_nondisjoint_commutative}
Let $\Sigma = \{ 0,1,2\}^{\mathbb Z} $ and $(\Sigma,\nu ,S)$ be the two sided $(1/3,1/3,1/3)$-Bernoulli shift.
Set $X=\Sigma\times \Sigma\times \Sigma$ endowed with a probability measure $\mu =\nu ^{\otimes 3}$. 
Define $T_0=\mathrm{id} _\Sigma \times S\times S$, $T_1=S\times \mathrm{id}_ \Sigma \times S$, and $T_2=S\times S\times \mathrm{id} _\Sigma $. 
Each $T_i$ $(i=0,1,2)$ preserves $\mu $ and $T_i\circ T_j=T_j\circ T_i$ ($j = 0, 1,2$). 
Let 
\[
A = \left\{ \left( (x_i)_{i \in \mathbb Z}, (y_i)_{i \in \mathbb Z}, (z_i)_{i \in \mathbb Z} \right) \in X\colon x_0, y_0, z_0 \text { are distinct} \right\} . 
\]
Since there are six distinct triples $(x_0, y_0, z_0)$ - $(0,1,2)$, $(0,2,1)$, $(1,0,2)$, $(1,2,0)$, $(2,0,1)$, $(2,1,0)$,
we have $ \mu (A)= 3!/3^3= 2/9$.
Note that 
\begin{equation} \label{distinct_triples}
\left( (x_i)_{i \in \mathbb Z}, (y_i)_{i \in \mathbb Z}, (z_i)_{i \in \mathbb Z}\right) \in T_0^{-n}(A)\cap T_1^{-n}(A)\cap T_2^{-n}(A) 
\end{equation}
if and only if  $(x_0, y_n, z_n)$, $(x_n, y_0, z_n)$, and $(x_n, y_n, z_0)$ are distinct triples.
Then, one can check that \eqref{distinct_triples} is achieved if and only if $(x_0, y_0, z_0)$ is a distinct triples and $x_0 = x_{n}$, $y_0 = y_{n}$, $z_0 = z_{n}$.
Therefore we have 
\[
\mu \left( T_0^{-n}(A)\cap T_1^{-n}(A)\cap T_2^{-n}(A) \right) = \frac{3!}{3^6} =  \frac{3}{4} \left( \frac{2}{9}\right)^3= \frac{3}{4} \mu (A)^3,
\]
hence we see
\[
\left\{ n\in \mathbb{N} \colon \mu \left( T_0^{-n}(A)\cap T_1^{-n}(A)\cap T_2^{-n}(A) \right) >\mu (A)^{3}-\varepsilon \right\} =\{ 0\} 
\]
for sufficiently small $\varepsilon >0$ and the set is not syndetic.
Since $T_0$, $T_1$, and $T_2$ have positive entropy, they cannot be disjoint (\cite{Furstenberg1967}*{Theorem I.1}). 

\end{example}
\section{Preliminaries} \label{preliminaries}

\subsection{Regular models}

We briefly recall the notion of conjugacy and isomorphism. 
See \cites{Furstenberg1981,Walters1982} for details. 
For a probability space $(X,\mathcal{B} ,\mu )$, let $\mathcal{B} _0=\{ [B]\colon B\in \mathcal{B} \} $, where $[B]$ is the equivalence class of $B$. 
For a measure preserving transformation $T$ on $(X,\mathcal{B} ,\mu )$, define $T_0^{-1}\colon \mathcal{B} _0\to \mathcal{B} _0$ by 
\[
T_0^{-1}([B])=[T^{-1}(B)]. 
\]

Suppose that $(X,\mathcal{B} ,\mu )$ is a standard Borel probability space, i.e., $X$ is a Polish space endowed with its Borel $\sigma $-algebra $\mathcal{B} $ and $\mu $ is a measure on it. 
Here and below, by a Polish space we mean a topological Polish space, that is, a separable completely metrizable topological space. 
Then the systems ${\bm X}=(X,\mathcal{B} ,\mu ,T)$ is a \emph{regular} system. 
That is, there exist a compact metrizable space $W$, a Borel probability measure $\nu $ on the Borel $\sigma $-algebra $\mathcal{W} $, a measurable transformation $S$ which preserves $\nu $,  and an invertible homomorphism $\varphi _0\colon \mathcal{W} _0\to \mathcal{B} _0$ such that 
\[
T_0^{-1}\circ \varphi _0=\varphi _0 \circ S_0^{-1}. 
\]
See \cite{Furstenberg1981}*{Proposition 5.3} for instance. 
The conjugacy transformation $\varphi _0$ can be chosen as an isomorphism up to measure zero. %taken isomorphism up to measure zero.  
Consequently, %Namely, 
there exists an invertible measure preserving transformation $\varphi \colon X\to W$ such that 
\[
\varphi \circ T=S\circ \varphi
\]
$\mu $-almost everywhere on $X$. 
See \cite{Furstenberg1981}*{Theorem 5.15} or \cite{Walters1982}*{Theorem 2.6}. 
Henceforth, we call such a systems ${\bm W}=(W,\mathcal{W} ,\nu ,S)$ a \emph{regular model} of ${\bm X}$. 
\subsection{Disjointness} \label{def_disjoint}

We recall the notion of disjointness in the sense of Furstenberg \cite{Furstenberg1967} briefly. 
See \cites{Rudolph1990, delaRue2012} for detail.
Let ${\bm X}=(X,\mathcal{B} _X, \mu ,T)$ and ${\bm Y}= (Y, \mathcal{B} _Y, \nu , S)$ be two probability measure preserving systems.
The \emph{joining} of the two systems ${\bm X}$ and ${\bm Y}$ is a probability measure on $X\times Y$ which is invariant under $T\times S$, and whose projections on $X$ and $Y$ are $\mu $ and $\nu $, respectively.
We denote the set of joinings of ${\bm X}$ and ${\bm Y}$ by $\mathcal{J} (T,S)$. 
The set $\mathcal{J} (T,S)$ is never empty since it contains the product measure $\mu \otimes \nu $. 
Note that $\mathcal{J} (T,S)$ is a convex set, where $s\lambda +(1-s)\rho $ is defined by $(s\lambda +(1-s)\rho )(E)=s\lambda (E)+(1-s)\rho (E)$ for $s\in [0,1]$. 
If ${\bm X}$ and ${\bm Y}$ are ergodic, then its extreme points are the ergodic joinings with respect to $T\times S$ (\cite{Rudolph1990}*{Theorem 6.3}). 
Two systems ${\bm X}$ and ${\bm Y}$ are said to be \emph{disjoint} if $\mu \otimes \nu $ is the unique joining of ${\bm X}$ and ${\bm Y}$. 

The definition of joining can be generalized to $k$-tuples of measure preserving systems ${\bm X}_i=(X_i,\mathcal{B} _i, \mu _i,T_i)$ and we denote by $\mathcal{J} (T_1,\dots ,T_k)$ the set of joinings. 
We call ${\bm X}_i$ or $T_i$ are \emph{mutually disjoint} if $\otimes _{i=1}^k\mu _i$ is the unique joining of ${\bm X}_i$'s.
If ${\bm Y}=(Y, \mathcal{B} _Y, \nu , S)$ is ergodic and $\lambda \in \mathcal{J} (T,S)$ is invariant under ${\rm id}_X\times S$, then $\lambda =\mu \otimes \nu $. 
See \cite{Rudolph1990}*{Lemma 6.14} for instance. 
The following lemma is an immediate consequence of this fact. 

\begin{lemma} \label{disjointness_criterion_ergodic_id}
Let $(X,\mathcal{B} , \mu )$ be a probability space, and let $T_1,\dots ,T_k$ be ergodic measure preserving transformations. 
If $T_1,\dots ,T_k$ are mutually disjoint, then so are ${\rm id}_X, T_1,\dots ,T_k$. 
\end{lemma}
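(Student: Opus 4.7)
The plan is to take an arbitrary $\lambda\in\mathcal{J}(\mathrm{id}_X,T_1,\dots,T_k)$, interpret it as a joining of two systems, and invoke the consequence of \cite{Rudolph1990}*{Lemma 6.14} recorded just before the lemma statement: if $S$ is ergodic and $\lambda'\in\mathcal{J}(T,S)$ is invariant under $\mathrm{id}_X\times S$, then $\lambda'=\mu\otimes\nu$.

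First I would project $\lambda$ onto the last $k$ coordinates to obtain a measure that is $T_1\times\cdots\times T_k$-invariant with every one-dimensional marginal equal to $\mu$; by the mutual disjointness hypothesis on $T_1,\dots,T_k$, this projected measure must be $\mu^{\otimes k}$. This identifies $\lambda$ as an element of $\mathcal{J}(\mathrm{id}_X,\,T_1\times\cdots\times T_k)$ on $X\times X^k$, and it is automatically invariant under $\mathrm{id}_X\times(T_1\times\cdots\times T_k)$ (this is just the joining property re-read).

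To apply the cited fact with $T=\mathrm{id}_X$ and $S=T_1\times\cdots\times T_k$, I need to know that $T_1\times\cdots\times T_k$ is ergodic on $(X^k,\mu^{\otimes k})$. This I would verify using the mutual disjointness together with the ergodicity of the individual factors: if a nontrivial $T_1\times\cdots\times T_k$-invariant set $A\subset X^k$ existed, the normalization of $\mu^{\otimes k}|_A$ would be an invariant probability measure whose $i$-th marginal is a $T_i$-invariant measure absolutely continuous with respect to $\mu$, hence equal to $\mu$ by ergodicity of $T_i$; this would produce a joining of $T_1,\dots,T_k$ different from $\mu^{\otimes k}$, contradicting mutual disjointness.

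I expect this ergodicity check to be the main (modest) obstacle; once it is settled, the quoted fact yields $\lambda=\mu\otimes\mu^{\otimes k}=\mu^{\otimes(k+1)}$ directly, and since $\lambda$ was arbitrary, $\mathrm{id}_X,T_1,\dots,T_k$ are mutually disjoint as claimed.
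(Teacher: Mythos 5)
Your proposal is correct and follows essentially the same route as the paper: project onto the last $k$ coordinates to identify the marginal as $\mu^{\otimes k}$, note that $T_1\times\cdots\times T_k$ is ergodic for $\mu^{\otimes k}$, and apply \cite{Rudolph1990}*{Lemma 6.14} with $S=T_1\times\cdots\times T_k$ to get $\lambda=\mu\otimes\mu^{\otimes k}$. The only difference is that the paper asserts the ergodicity of the product directly from the hypotheses, whereas you supply the (correct) absolute-continuity argument for it.
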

\begin{proof}
Let $\lambda \in \mathcal{J} ({\rm id}_X,T_1,\dots ,T_k)$. 
By the assumption, we see that $T_1 \times \cdots \times T_k$ is ergodic with respect to $\mu^{\otimes k}$. 
Since $\lambda $ is invariant under ${\rm id}_X\times (T_1 \times \cdots \times T_k)$, we have $\lambda =\mu \otimes \mu^{\otimes k}$ by \cite{Rudolph1990}*{Lemma 6.14}. 
\end{proof}

Let $(W,\mathcal{W} ,\nu )$ be a Borel probability space, and let $S_1,\dots ,S_k$ be measure preserving transformations on $W$. 
Given integres $M<N$, define a probability measure $\lambda _{M,N}$ on $W^k$ by
\begin{equation} \label{unif_joining}
\lambda _{M,N}(A_1\times \cdots \times A_k)=\frac{1}{N-M} \sum _{n=M}^{N-1} \nu \left( S_1^{-n}(A_1)\cap \dots \cap S_k^{-n}(A_k)\right) 
\end{equation}
for $A_i\in \mathcal{W} $. 

\begin{lemma} \label{joinning}
Let $(W,\mathcal{W} ,\nu )$ be a compact metrizable Borel probability space, $S_1,\dots ,S_k$ be measure preserving transformations on $W$, and $\lambda _{M,N}$ a probability measure on $W^k$ defined by \eqref{unif_joining}. 
Then any weak*-limit measure of the sequence $\left( \lambda _{M,N} \right) _{M,N}$ as $N-M \to \infty$ belongs to $\mathcal{J} (S_1, \dots, S_k)$. 
\end{lemma}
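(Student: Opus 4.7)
My plan is to verify directly that any weak*-limit $\lambda$ of a subsequence $(\lambda_{M_j,N_j})$ with $N_j - M_j \to \infty$ satisfies the two defining conditions of a joining: (i) the one-dimensional marginals of $\lambda$ equal $\nu$, and (ii) $\lambda$ is invariant under the product map $S := S_1 \times \cdots \times S_k$ on $W^k$. Since $W^k$ is compact metrizable, at least one such weak*-limit exists, and weak*-convergence on $W^k$ is equivalent to $\int f\,d\lambda_{M_j,N_j} \to \int f\,d\lambda$ for every $f \in C(W^k)$.

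For the marginals, it is convenient to rewrite $\lambda_{M,N}$ as a Cesàro average of pushforwards. Define $\Delta_n \colon W \to W^k$ by $\Delta_n(w) = (S_1^n w,\dots,S_k^n w)$; then
\[
\lambda_{M,N} = \frac{1}{N-M}\sum_{n=M}^{N-1}(\Delta_n)_*\nu.
\]
Projecting onto the $i$-th coordinate and using that each $S_i$ preserves $\nu$, the $i$-th marginal of each $(\Delta_n)_*\nu$ is $(S_i^n)_*\nu = \nu$. Hence every $\lambda_{M,N}$ already has $\nu$ as its $i$-th marginal, and since marginalization is weak*-continuous, the same holds for $\lambda$.

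For invariance, I compute $S_*\lambda_{M,N}$ on a product set: changing the index $n \mapsto n+1$,
\[
S_*\lambda_{M,N}(A_1 \times \cdots \times A_k) = \frac{1}{N-M}\sum_{n=M+1}^{N}\nu\!\left(S_1^{-n}(A_1)\cap \cdots \cap S_k^{-n}(A_k)\right).
\]
Thus $S_*\lambda_{M,N} - \lambda_{M,N} = \frac{1}{N-M}\bigl((\Delta_N)_*\nu - (\Delta_M)_*\nu\bigr)$, so for any $f \in C(W^k)$,
\[
\left|\int f\,dS_*\lambda_{M,N} - \int f\,d\lambda_{M,N}\right| \le \frac{2\|f\|_\infty}{N-M} \xrightarrow[N-M\to\infty]{} 0.
\]
Passing to the limit along the chosen subsequence and using weak*-convergence on both sides (for the left side, apply convergence to $f\circ S$, which is continuous since $S$ is assumed measurable; actually, to make this step rigorous, I would argue that each $\lambda_{M,N}$ has $\nu$ as each marginal and use a Stone--Weierstrass argument on product functions, reducing to the continuity of $f$ alone), one obtains $\int f\,dS_*\lambda = \int f\,d\lambda$ for all $f \in C(W^k)$, hence $S_*\lambda = \lambda$.

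The main subtlety I anticipate is the step $\int f\circ S\,d\lambda_{M_j,N_j} \to \int f\circ S\,d\lambda$, since $f\circ S$ is only measurable (not continuous) when the $S_i$ are merely measurable. To handle this, I would either assume (or note that the paper's setup allows) the regular model in which each $S_i$ is continuous, or I would work with test functions of product form $f(w_1,\dots,w_k) = g_1(w_1)\cdots g_k(w_k)$ with $g_i \in C(W)$, use that $g_i \circ S_i \in L^1(\nu)$ and that the marginals are fixed, and then approximate general continuous functions on $W^k$ by such products via Stone--Weierstrass. With this caveat resolved, both conditions for $\lambda \in \mathcal{J}(S_1,\dots,S_k)$ are established.
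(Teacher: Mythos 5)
Your proof follows essentially the same route as the paper's: the marginals are identified with $\nu$ using that each $S_i$ preserves $\nu$, and invariance under $S_1\times\cdots\times S_k$ follows from the same telescoping/index-shift estimate of order $2/(N-M)$. The measurability subtlety you flag in passing to the weak*-limit (that $f\circ S$ need not be continuous) is real but is glossed over in the paper as well; your proposed fix via product test functions, the fixed marginals, and Stone--Weierstrass is a sound way to close it.
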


\begin{proof}
Let $\lambda $ be a weak*-limit measure of $\left( \lambda _{M,N} \right) _{M,N}$ as $N-M \to \infty$. 
Let $p_i\colon W^k\ni (x_1,\dots , x_k) \mapsto x_i\in W$ be the canonical projection for each $i\in \{ 1,\dots ,k\} $. 
Then for every $i\in \{ 1,\dots ,k\} $, we see 
\begin{align*}
(p_i)_*\lambda _{M,N}(A_i)
&=\lambda _{M,N}(p_i^{-1}(A_i)) \\
&=\frac{1}{N-M} \sum _{n=M}^{N-1} \nu \left( S_1^{-n}(W)\cap \dots \cap S_i^{-n}(A_i)\cap \dots \cap S_k^{-n}(W)\right) \\
&=\frac{1}{N-M} \sum _{n=M}^{N-1} \nu \left( S_i^{-n}(A_i)\right) =\nu (A_i) ,
\end{align*}
and hence $(p_i)_*\lambda =\nu $. 

Next, we show $\lambda $ is invariant under $S_1 \times \dots \times S_k$. 
Note that 
\begin{align*}
\sum _{n=M}^{N-1} \nu \left( \cap _{i=1}^kS_i^{-(n+1)}(A_i)\right) -\sum _{n=M}^{N-1} \nu \left( \cap _{i=1}^kS_i^{-n}(A_i)\right) 
&=\nu \left( \cap _{i=1}^kS_i^{-(M+1)}(A_i)\right) -\nu \left( \cap _{i=1}^kS_i^{-M}(A_i)\right) \\
&\quad +\nu \left( \cap _{i=1}^kS_i^{-(M+2)}(A_i)\right) -\nu \left( \cap _{i=1}^kS_i^{-(M+1)}(A_i)\right) \\
&\quad +\cdots +\nu \left( \cap _{i=1}^kS_i^{-N}(A_i)\right) -\nu \left( \cap _{i=1}^kS_i^{-(N-1)}(A_i)\right) \\
&=\nu \left( \cap _{i=1}^kS_i^{-N}(A_i)\right) -\nu \left( \cap _{i=1}^kS_i^{-M}(A_i)\right) .
\end{align*}
Thus, given $M<N$, we have 
\begin{align*}
&\left| \lambda _{M,N}\left( S_1^{-1}(A_1)\times \cdots \times S_k^{-1}(A_k)\right) -\lambda _{M,N}\left( A_1\times \cdots \times A_k\right) \right| \\
&\quad =\frac{1}{N-M} \left| \sum _{n=M}^{N-1}\nu \left( \cap _{i=1}^kS_i^{-(n+1)}(A_i)\right) -\sum _{n=M}^{N-1} \nu \left( \cap _{i=1}^kS_i^{-n}(A_i)\right) \right| \\
&\quad =\frac{1}{N-M} \left| \nu \left( \cap _{i=1}^kS_i^{-N}(A_i)\right) -\nu \left( \cap _{i=1}^kS_i^{-M}(A_i)\right) \right| \leq \frac{2}{N-M} 
\end{align*}
for every $A_i\in \mathcal{W} $. 
Letting $N-M\to \infty $, it follows that $(S_1 \times \dots \times S_k)_*\lambda =\lambda $. 
\end{proof}

For a topological space $Y$, denote by $C(Y)$ the set of continuous functions on $Y$. 

\begin{lemma} \label{disjoint_unif_criterion}
Let $(W,\mathcal{W} ,\nu )$, $S_1,\dots ,S_k$, and $\lambda _{M,N}$ be as in Lemma \ref{joinning}. 
Suppose that $S_i$ are mutually disjoint. 
Then $\left( \lambda _{M,N} \right) _{M,N}$ converges to $\nu ^{\otimes k}$ with respect to the weak* topology as $N-M \to \infty$. 
\end{lemma}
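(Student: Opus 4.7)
The plan is to combine the previous lemma with a compactness argument in the space of probability measures. Since $W$ is compact metrizable, so is $W^k$, and hence the space $\mathcal{P}(W^k)$ of Borel probability measures on $W^k$ is compact and metrizable (in particular, sequentially compact) with respect to the weak* topology. Therefore, to show that $(\lambda_{M,N})_{M,N}$ converges to $\nu^{\otimes k}$ as $N-M \to \infty$, it suffices to show that $\nu^{\otimes k}$ is the unique weak* limit point of this net.

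First, I would fix any sequence of pairs $(M_j,N_j)$ with $N_j - M_j \to \infty$ as $j \to \infty$. By the compactness above, it has a subsequence along which the measures $\lambda_{M_j,N_j}$ converge weakly$^*$ to some probability measure $\lambda$ on $W^k$. By Lemma \ref{joinning}, any such weak* limit $\lambda$ belongs to $\mathcal{J}(S_1,\dots,S_k)$. Since by hypothesis the transformations $S_1,\dots,S_k$ are mutually disjoint, the set $\mathcal{J}(S_1,\dots,S_k)$ reduces to the single element $\nu^{\otimes k}$, and hence $\lambda = \nu^{\otimes k}$.

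To conclude, I would argue by contradiction: if $(\lambda_{M,N})$ did not converge to $\nu^{\otimes k}$ as $N-M \to \infty$, then by metrizability of the weak* topology on $\mathcal{P}(W^k)$ there would exist a metric $D$ compatible with weak* convergence, some $\delta > 0$, and a sequence $(M_j, N_j)$ with $N_j - M_j \to \infty$ such that $D(\lambda_{M_j,N_j}, \nu^{\otimes k}) \ge \delta$ for all $j$. Extracting a further weak* convergent subsequence via compactness, we obtain a limit point $\lambda$ with $D(\lambda,\nu^{\otimes k}) \ge \delta$; but by the previous paragraph, $\lambda = \nu^{\otimes k}$, a contradiction. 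This gives the desired convergence.

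I do not anticipate a real obstacle here, since all the work has been done in Lemma \ref{joinning} and in the definition of disjointness. The only mild point is to observe that weak* compactness of $\mathcal{P}(W^k)$ hinges on $W^k$ being compact metrizable, which is guaranteed by the hypothesis on $W$; this is why the statement is placed in the category of compact metrizable spaces rather than general Borel probability spaces, and the reduction from the original setting will presumably be handled later by passing to a regular model.
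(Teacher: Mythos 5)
Your proposal is correct and follows exactly the route of the paper: the paper's proof is a one-line observation that the claim "readily follows from Lemma \ref{joinning} since $\mathcal{J}(S_1,\dots,S_k)=\{\nu^{\otimes k}\}$", and your argument simply spells out the standard weak* compactness/uniqueness-of-limit-points reasoning implicit in that sentence. No gap, and no genuinely different method.
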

\begin{proof}
It readily follows from Lemma \ref{joinning} since $\mathcal{J} (S_1, \dots, S_k)=\{ \nu ^{\otimes k}\} $. 
\end{proof}

For functions $\varphi _1,\dots ,\varphi _k$ on $W$, denote by $\varphi _1\otimes \dots \otimes \varphi _k$ the function on $W^k$ given by
\[
\varphi _1\otimes \dots \otimes \varphi _k(x_1,\dots ,x_k)=\varphi _1(x_1)\cdots \varphi _k(x_k). 
\]
Then note that the measure $\lambda _{M,N}$ defined in \eqref{unif_joining} can be characterized equivalently as
\begin{equation} \label{equiv_def_unif_joining}
\int _{W^k}\mathbbm{1} _{A_1}\otimes \dots \otimes \mathbbm{1} _{A_k}\, d\lambda _{M,N} =\frac{1}{N-M}\sum _{n=M}^{N-1} \int _WS_1^{n}\mathbbm{1} _{A_1}\cdot \cdots \cdot S_k^{n}\mathbbm{1} _{A_k}\, d\nu .
\end{equation}
Here and below, we denote by $\mathbbm{1} _{A}$ the indicator function of $A$, that is,
\[
\mathbbm{1} _A(x)=\begin{cases} 1,& x\in A,\\ 0, & x\not\in A. \end{cases}
\] 
\section{Multiple recurrence for disjoint systems} \label{poof}

\subsection{Uniform average of multiple correlation sequences}

In this subsection, we prove the following result. 

\begin{proposition} \label{unif_averag_mr_correlation_seq_disjoint_regular}
Let $(W,\mathcal{W} ,\nu )$ be a compact metrizable Borel probability space, $S_1,\dots ,S_k$ be measure preserving transformations on $W$. 
Suppose that $S_1,\dots ,S_k$ are mutually disjoint. 
Then for every $f_i\in L^\infty (W,\nu )$, $i\in \{ 1,\dots ,k\} $, we have
\[
\lim _{N - M \to \infty }\frac{1}{N-M} \sum _{n=M}^{N-1}\int _{W}\prod _{i=1}^kS_i^nf_i\, d\nu =\prod _{i=1}^k\int _{W}f_i\, d\nu . 
\]
\end{proposition}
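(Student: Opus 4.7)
The plan is to translate the Ces\`aro average into an integral against the measure $\lambda_{M,N}$ defined in \eqref{unif_joining}, invoke the weak${}^*$ convergence $\lambda_{M,N}\to \nu^{\otimes k}$ supplied by Lemma \ref{disjoint_unif_criterion}, and then remove the continuity assumption via an $L^1(\nu)$-approximation.

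First I would treat the case of continuous $f_1,\dots,f_k\in C(W)$. Since $W$ is compact metrizable, so is $W^k$, and $f_1\otimes\dots\otimes f_k$ lies in $C(W^k)$. The identity \eqref{equiv_def_unif_joining}, which is stated for indicator functions, extends to bounded Borel functions by linearity and a routine monotone class / dominated convergence argument, giving
\[
\frac{1}{N-M}\sum_{n=M}^{N-1}\int_W \prod_{i=1}^k S_i^n f_i\, d\nu = \int_{W^k} f_1\otimes\dots\otimes f_k\, d\lambda_{M,N}.
\]
By Lemma \ref{disjoint_unif_criterion}, the right-hand side converges as $N-M\to\infty$ to
\[
\int_{W^k} f_1\otimes\dots\otimes f_k\, d\nu^{\otimes k} \;=\; \prod_{i=1}^k\int_W f_i\, d\nu,
\]
which is the desired limit for continuous integrands.

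Next I would extend to $f_i\in L^\infty(W,\nu)$ by approximation. Set $B=\max_i\|f_i\|_\infty$ and fix $\varepsilon>0$. Since $\nu$ is a regular Borel probability measure on the compact metrizable space $W$, Lusin's theorem (followed by truncation at the level $B$, which only decreases the $L^1$ distance to $f_i$) provides $g_i\in C(W)$ with $\|g_i\|_\infty\leq B$ and $\|f_i-g_i\|_{L^1(\nu)}<\varepsilon$. Writing $\prod_i S_i^n f_i-\prod_i S_i^n g_i$ as a telescoping sum and using the fact that each $S_i$ preserves $\nu$, one finds
\[
\left|\int_W\prod_{i=1}^k S_i^n f_i\, d\nu-\int_W\prod_{i=1}^k S_i^n g_i\, d\nu\right|\leq kB^{k-1}\varepsilon
\]
uniformly in $n$, and similarly $\bigl|\prod_i\int_W f_i\, d\nu-\prod_i\int_W g_i\, d\nu\bigr|\leq kB^{k-1}\varepsilon$. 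Combining these uniform estimates with the continuous case established above and letting $\varepsilon\to 0$ yields the proposition.

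The main obstacle, insofar as there is one, is this last approximation step: one must choose the continuous approximants $g_i$ with a uniform sup-norm bound in order to make the telescoping estimate meaningful while simultaneously achieving $L^1$ closeness. Once that is set up carefully, Lemma \ref{disjoint_unif_criterion} does essentially all of the real work, since disjointness is precisely what forces every weak${}^*$ limit of $(\lambda_{M,N})$ to be the product measure $\nu^{\otimes k}$.
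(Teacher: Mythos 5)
Your proposal is correct and follows essentially the same route as the paper's proof: approximate each $f_i$ in $L^1(\nu)$ by continuous functions with the same sup-norm bound (Lusin plus truncation), control the two approximation errors by a telescoping estimate using measure preservation, and handle the continuous case by rewriting the average as $\int_{W^k}\varphi_1\otimes\dots\otimes\varphi_k\,d\lambda_{M,N}$ and invoking the weak${}^*$ convergence of Lemma \ref{disjoint_unif_criterion}. The only cosmetic difference is the order of presentation (continuous case first versus a single three-term triangle-inequality split), and your remark about needing the uniform sup-norm bound on the approximants is exactly the point the paper's estimates \eqref{caluculus1} and \eqref{caluculus2} rely on.
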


\begin{proof}
Denote by $\| \cdot \| _p=\| \cdot \| _{L^p(W,\nu )}$ for every $p\in [1,\infty ]$. 
Let $\varepsilon \in (0,1)$ be given, and set $\Lambda = \max_{1 \leq i \leq k} \| f_i \|_\infty $. 
Note that $C(W)$ is dense in $L^{1} (W,\nu )$ since $W$ is compact Hausdorff. 
Since $L^\infty (W,\nu )\subset L^{1} (W,\nu )$, we can find $\varphi _i\in C(W)$ such that  for each $i=1,\dots ,k$
\[
\| f_i-\varphi _i\| _{1}<\varepsilon \quad \text{and} \quad \| \varphi _i\| _\infty \leq \| f_i\| _\infty \le \Lambda.
\]
(See e.g. \cite{Rudin1987}*{Lusin's Theorem 2.24 and Theorem 3.14}.) 
For notational simplicity, we write 
\[
\bar{f} =f_1\otimes \dots \otimes f_k \quad \text{and} \quad \bar{\varphi } =\varphi _1\otimes \dots \otimes \varphi _k.
\]
Note that $\bar{f} \in L^\infty (W^{k},\nu ^{\otimes k})$ and $\bar{\varphi } \in C(W^{k})$. 

By the triangle inequality, we have
\begin{align}
&\left| \frac{1}{N-M} \sum _{n=M}^{N-1}\int _{W}\prod _{i=1}^kS_i^nf_i\, d\nu -\int _{W^{k}}\bar{f} \, d\nu ^{\otimes k}\right| \nonumber \\ 
%&\leq \frac{1}{N-M} \sum _{n=M}^{N-1}\left| \int _{W}\bar{f} \left( S_1^n(x),\dots ,S_k^n(x)\right) \, d\nu (x)-\int _{W}\bar{\varphi } \left( S_1^n(x),\dots ,S_k^n(x)\right) \, d\nu (x)\right| \nonumber \\
%&\quad +\left| \frac{1}{N-M} \sum _{n=M}^{N-1}\int _{W}\bar{\varphi} \left( S_1^n(x),\dots ,S_k^n(x)\right) \, d\nu (x)-\int _{W^{k}}\bar{\varphi } \, d\nu ^{\otimes k}\right| \nonumber \\
%&\quad +\left| \int _{W^{k}}\bar{\varphi } \, d\nu ^{\otimes k}-\int _{W^{k}}\bar{f} \, d\nu ^{\otimes k}\right| \nonumber \\
&\leq \frac{1}{N-M} \sum _{n=M}^{N-1}\int _{W}\left| \bar{f} \left( S_1^n(x),\dots ,S_k^n(x)\right) -\bar{\varphi } \left( S_1^n(x),\dots ,S_k^n(x)\right) \right| \, d\nu (x) \label{sum} \\
&\quad +\left| \frac{1}{N-M} \sum _{n=M}^{N-1}\int _{W}\bar{\varphi} \left( S_1^n(x),\dots ,S_k^n(x)\right) \, d\nu (x)-\int _{W^{k}}\bar{\varphi } \, d\nu ^{\otimes k}\right| \label{disjoint} \\
&\quad +\int _{W^{k}}\left| \bar{\varphi }-\bar{f} \right| \, d\nu ^{\otimes k}. \label{inetgral}
\end{align}
Henceforth, we estimate these three terms \eqref{sum}, \eqref{disjoint} and \eqref{inetgral}. 
We begin with \eqref{inetgral}.  
Since
\begin{equation}\label{expand}\begin{split}
f_1(x_1)\cdots f_k(x_k)-\varphi _1(x_1)\cdots \varphi _k(x_k)
&= f_1(x_1)\cdots f_k(x_k)-\varphi _1(x_1)f_2(x_2)\cdots f_k(x_k) \\
&\quad + \varphi _1(x_1)f_2(x_2)\cdots f_k(x_k)-\varphi _1(x_1) \varphi_2(x_2) f_3 (x_3) \cdots f_k(x_k) \\
&\quad + \cdots + \varphi _1(x_1) \cdots \varphi_{k-1}(x_{k-1}) f_k(x_k) -\varphi _1(x_1) \varphi_2(x_2)  \cdots \varphi _k(x_k),
\end{split}\end{equation}
we have
\begin{align}
\left\| \bar f - \bar \varphi \right\|_{L^1(W^{k},\nu ^{\otimes k})} 
&\leq \| f_1 -\varphi _1 \|_1 \| f_2\|_\infty \cdots \| f_k \|_\infty +  \| f_2 -\varphi _2\|_1 \| \varphi_1\|_\infty \| f_3 \|_\infty \cdots \| f_k \|_\infty \nonumber \\
&\quad + \cdots + \| f_k -\varphi _k \|_1 \| \varphi _1\|_\infty \cdots \| \varphi_{k-1} \|_\infty \nonumber \\
&\leq k\Lambda ^{k-1} \varepsilon . \label{caluculus1}
\end{align}

Using the same argument as above, one can estimate \eqref{sum}. 
For a given $n \in [M,N)\cap \mathbb N$, replacing $x_i$ by $S_i^n (x)$ for $i\in \{ 1,\dots ,k\} $ in \eqref{expand}, we have
\begin{align*}
&\left| \bar\varphi \left( S_1^n(x),\dots ,S_k^n(x)\right) - \bar f \left( S_1^n(x),\dots ,S_k^n(x)\right) \right| \\
&\leq | f_1 (S_1^n(x)) -\varphi _1(S_1^n(x)) | \cdot \| f_2 \|_\infty \cdots \| f_k \|_\infty \\
&\quad +  | f_2 (S_2^n(x)) -\varphi _2(S_2^n(x)) | \cdot \| \varphi _1\|_\infty \| f_3 \|_\infty \cdots \| f_k \|_\infty \\
&\quad + \cdots + | f_k(S_k^n(x)) -\varphi _k(S_k^n(x)) | \cdot \| \varphi _1\|_\infty \cdots \| \varphi_{k-1} \|_\infty\\
&\leq \Lambda ^{k-1} \left( |f_1 (S_1^n(x)) -\varphi _1(S_1^n(x))|+\cdots +|f_k(S_k^n(x)) -\varphi _k(S_k^n(x))| \right) .
\end{align*}
By integrating, we have
\begin{equation} \label{caluculus2}
\int _{W}\left| \bar{f} (S_1^n(x),\dots ,S_k^n(x))-\bar{\varphi } (S_1^n(x),\dots ,S_k^n(x))\right| \, d\nu (x)\leq k\Lambda ^{k-1} \varepsilon .
\end{equation}

To estimate \eqref{disjoint}, recall the measure $\lambda _{M,N}$ defined in \eqref{unif_joining}. 
Notice that one has
\[
\int _{W^{k}}\bar{\varphi } \, d\lambda _{M,N}=\frac{1}{N-M} \sum _{n=M}^{N-1}\int _{W}\varphi _1\circ S_1^n\cdot \cdots \cdot \varphi _k\circ S_{k}^n\, d\nu 
\]
by using \eqref{equiv_def_unif_joining}. 
Then, by Lemma \ref{disjoint_unif_criterion}, we have 
\begin{equation} \label{caluculus3}
\lim _{N-M\to \infty }\frac{1}{N-M} \sum _{n=M}^{N-1}\int _{W}\bar{\varphi }(S_1^n(x),\dots ,S_{k}^n(x))\, d\nu (x)=\int _{W^{k}}\bar{\varphi } \, d\nu ^{\otimes k}.
\end{equation}

By \eqref{caluculus1}, \eqref{caluculus2} and \eqref{caluculus3}, we have 
\[
\left| \frac{1}{N-M} \sum _{n=M}^{N-1}\int _{W}\prod _{i=1}^kS_i^nf_i\, d\nu -\int _{W^{k}}\bar{f}\, d\nu ^{\otimes k}\right| \leq 2k\Lambda ^{k-1}\varepsilon +o_{N-M}(1),
\]
as $N-M\to \infty $. 
Since
\[
\int _{W^{k}}\bar{f}\, d\nu ^{\otimes k}=\prod _{i=1}^k\int _{W}f_i\, d\nu ,
\] 
we complete the proof of Proposition \ref{unif_averag_mr_correlation_seq_disjoint_regular}. 
\end{proof}

\begin{theorem} \label{unif_averag_mr_correlation_seq_disjoint}
Let $(X,\mathcal{B} ,\mu )$ be a standard Borel probability space.
Given $k\in \mathbb{N}$, let $T_0, T_1,\dots ,T_k$ be measure preserving transformations on $X$ such that $T_0, T_1,\dots ,T_k$ are mutually disjoint. 
Then for every $f_i\in L^\infty (X,\mu )$, $i\in \{ 0,1,\dots ,k\} $, 
\[
\lim _{N-M\to \infty }\frac{1}{N-M} \sum _{n=M}^{N-1}\int _{X} \prod _{i=0}^kT_i^nf_i\, d\mu =\prod _{i=0}^k\int _{X}f_i\, d\mu . 
\]
\end{theorem}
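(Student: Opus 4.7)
The goal is to reduce Theorem~\ref{unif_averag_mr_correlation_seq_disjoint} to the already-proven Proposition~\ref{unif_averag_mr_correlation_seq_disjoint_regular}. Proposition~\ref{unif_averag_mr_correlation_seq_disjoint_regular} requires the underlying space to be compact metrizable (so that $C(W)$ is dense in $L^{1}$ and the weak$^{*}$ criterion from Lemma~\ref{disjoint_unif_criterion} is available), while Theorem~\ref{unif_averag_mr_correlation_seq_disjoint} only assumes $(X,\mathcal{B},\mu)$ to be standard Borel. The plan is therefore to realize $(X,\mathcal{B},\mu,T_0,\dots,T_k)$ as a joint regular model: a compact metrizable Borel probability space $(W,\mathcal{W},\nu)$ with measurable measure preserving transformations $S_0,\dots,S_k$ and an invertible measure preserving $\varphi\colon X\to W$ satisfying $\varphi\circ T_i=S_i\circ\varphi$ $\mu$-almost everywhere for every $i\in\{0,1,\dots,k\}$.

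The construction of such a joint regular model follows the same pattern recalled in Section~\ref{preliminaries} for a single transformation. Since $(X,\mathcal{B},\mu)$ is standard Borel, one can choose a countable subalgebra $\mathcal{A}_0\subset\mathcal{B}$ which generates $\mathcal{B}$ modulo null sets; enlarging $\mathcal{A}_0$ to the algebra generated by $\{T_i^{-n}A:A\in\mathcal{A}_0,\ 0\leq i\leq k,\ n\geq 0\}$ yields another countable algebra $\mathcal{A}$ which is invariant under each $T_i^{-1}$ and still generates $\mathcal{B}$ modulo null sets. Applying the Stone/Gelfand representation to the resulting measure algebra produces a compact metrizable $(W,\mathcal{W},\nu)$ together with an invertible homomorphism $\varphi_0\colon\mathcal{W}_0\to\mathcal{B}_0$ intertwining $T_i^{-1}$ and the induced $S_i^{-1}$, which then lifts to a point isomorphism $\varphi\colon X\to W$ exactly as in the single-transformation statement quoted from \cite{Furstenberg1981}*{Proposition~5.3 and Theorem~5.15}.

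Once this joint regular model is in place, mutual disjointness transfers. Indeed, $\varphi$ being an invertible measure preserving map, the pushforward $(\varphi\times\cdots\times\varphi)_{*}$ is a bijection between $\mathcal{J}(T_0,\dots,T_k)$ and $\mathcal{J}(S_0,\dots,S_k)$, so $\{ \mu^{\otimes(k+1)} \}=\mathcal{J}(T_0,\dots,T_k)$ implies $\{ \nu^{\otimes(k+1)} \}=\mathcal{J}(S_0,\dots,S_k)$. Setting $g_i=f_i\circ\varphi^{-1}\in L^{\infty}(W,\nu)$, Proposition~\ref{unif_averag_mr_correlation_seq_disjoint_regular} applied to $(W,\nu,S_0,\dots,S_k)$ gives
\[
\lim_{N-M\to\infty}\frac{1}{N-M}\sum_{n=M}^{N-1}\int_{W}\prod_{i=0}^{k}S_i^{n}g_i\,d\nu=\prod_{i=0}^{k}\int_{W}g_i\,d\nu.
\]
Since $\varphi\circ T_i^{n}=S_i^{n}\circ\varphi$ $\mu$-a.e.\ and $\varphi_{*}\mu=\nu$, both sides rewrite as the corresponding quantities over $X$ against $\mu$, yielding the conclusion.

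\textbf{Main obstacle.} The only nontrivial point is the existence of the \emph{joint} regular model: Section~\ref{preliminaries} records the statement only for a single transformation, so one must verify that the Stone representation can be carried out simultaneously for the finite family $T_0,\dots,T_k$. This is handled by taking the countable algebra $\mathcal{A}$ above to be invariant under all $T_i^{-1}$ at once, which is possible because the family is finite. After that, the transfer of disjointness and of the integrals through $\varphi$ is entirely routine.
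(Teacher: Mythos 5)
Your proposal follows essentially the same route as the paper's proof: pass to a regular (compact metrizable) model $(W,\mathcal{W},\nu,S_0,\dots,S_k)$ via a point isomorphism $\varphi$ intertwining all the $T_i$, observe that mutual disjointness transfers, and apply Proposition~\ref{unif_averag_mr_correlation_seq_disjoint_regular} after rewriting the integrals through $\varphi$. The only difference is that you spell out the construction of the joint regular model for the finite family (via a countable algebra invariant under all $T_i^{-1}$ at once), a point the paper simply asserts by citing the single-transformation statements from \cite{Furstenberg1981}.
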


\begin{proof}
Let ${\bm W}=(W,\mathcal{W} ,\nu ,\{ S_i\} )$ be a regular model of $(X,\mathcal{B} ,\mu ,\{ T_i\} )$ via an isomorphism $\varphi \colon X\to W$. 
Namely, for every $i\in \{ 0,1,\dots ,k\} $, 
\[
\varphi \circ T_i=S_i\circ \varphi
\]
$\mu $-almost everywhere on $X$. 
Since $T_i$ are disjoint, so are $S_i$.  

Let $f_i\in L^\infty (X,\mu )$, $i\in \{ 0,1,\dots ,k\} $. 
Then we have
\[
\int _{X} \prod _{i=0}^kf_i(T_i^n(x))\, d\mu (x)=\int _{W} \prod _{i=0}^kf_i(T_i^n(\varphi ^{-1}(w)))\, d\nu (w) =\int _{W} \prod _{i=0}^k(f_i\circ \varphi ^{-1})\circ S_i^n(w)\, d\nu (w).
\]
Note that $f_i\circ \varphi ^{-1}\in L^\infty (W,\nu )$ and 
\[
\int _Wf_i\circ \varphi ^{-1}\, d\nu =\int _Xf_i\, d\mu
\]
for every $i\in \{ 0,1,\dots ,k\} $. 
Applying Proposition \ref{unif_averag_mr_correlation_seq_disjoint_regular}, we obtain
\begin{align*}
\frac{1}{N-M} \sum _{n=M}^{N-1}\int _{X} \prod _{i=0}^kf_i(T_i^n(x))\, d\mu (x) 
&=\frac{1}{N-M} \sum _{n=M}^{N-1}\int _{W} \prod _{i=0}^k(f_i\circ \varphi ^{-1})\circ S_i^n(w)\, d\nu (w) \\
&\to \prod _{i=1}^k\int _{W}f_i\circ \varphi ^{-1}\, d\nu =\prod _{i=1}^k\int _Xf_i\, d\mu
\end{align*}
as $N-M\to \infty $.
\end{proof}

\subsection{Proof of Theorems \ref{multiple_uniform_average_disjoint} and \ref{mrkhintchine_disjoint_general}}
\label{proof-1.6-1.3-1.2}
\subsubsection{Uniform joint ergodicity for disjoint systems} \label{proof_th1.3}

First, we recall the notion of joint ergodicity. 
See \cites{Berend-Bergelson1984, Berend1985, Berend-Bergelson1986}. 
Let $T_1,\dots ,T_k$ be measure preserving transformations on a probability space $(X,\mathcal{B} ,\mu )$. 
The system $(X,\mathcal{B} ,\mu ,T_1,\dots ,T_k)$ is called $L^2$-\emph{jointly ergodic} if 
\begin{equation} \label{def_joint_ergodic}
\frac{1}{N} \sum _{n=0}^{N-1}\prod _{i=1}^kT_i^nf_i\to \prod _{i=1}^k\int _Xf_i\, d\mu ,
\end{equation}
as $N\to \infty $ in $L^2$-norm for every $f_1,\dots ,f_k\in L^\infty (X,\mu )$. 
It is called $L^2$-\emph{weak jointly ergodic} if the convergence \eqref{def_joint_ergodic} takes place in weak $L^2$. 
Similarly, the system $(X,\mathcal{B} ,\mu ,T_1,\dots ,T_k)$ is called $L^2$-\emph{uniformly jointly ergodic} if 
\begin{equation} \label{def_uniform_joint_ergodic}
\frac{1}{N-M} \sum _{n=M}^{N-1}\prod _{i=1}^kT_i^nf_i\to \prod _{i=1}^k\int _Xf_i\, d\mu ,
\end{equation}
as $N-M\to \infty $ in $L^2$-norm for every $f_1,\dots ,f_k\in L^\infty (X,\mu )$. 
It is called $L^2$-\emph{weak uniformly jointly ergodic} if the convergence \eqref{def_uniform_joint_ergodic} takes place in weak $L^2$. 

In \cite{Berend-Bergelson1986}, Berend and Bergelson showed the following. 

\begin{theorem}[\cite{Berend-Bergelson1986}*{Theorem 2.1 and Remark 2.1}] \label{Berend-Bergelson_criterion}
Let $T_1,\dots ,T_k$ be measure preserving transformations on a probability space $(X,\mathcal{B} ,\mu )$. 
Then the following conditions are equivalent.
\begin{enumerate}
\item The system is $L^2$-uniformly jointly ergodic.
\item The system is $L^2$-weak uniformly jointly ergodic. 
\item \begin{enumerate}
\item The product systems $T_1\times \dots \times T_k$ is ergodic with respect to $\mu ^{\otimes k}$. 
\item For every $f_1,\dots ,f_k\in L^\infty (X,\mu )$, 
\[
\lim _{N-M\to \infty }\frac{1}{N-M} \sum _{n=M}^{N-1}\int _X\prod _{i=1}^kT_i^nf_i\, d\mu =\prod _{i=1}^k\int _Xf_i\, d\mu .
\]
\end{enumerate}
\end{enumerate}
\end{theorem}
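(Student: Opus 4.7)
The theorem asserts the equivalence of $L^2$-uniform joint ergodicity (1), its weak counterpart (2), and the conjunction (3) of product-system ergodicity with integral Cesaro convergence. The direction $(1)\Rightarrow(2)$ is immediate, and $(2)\Rightarrow(3\mathrm{b})$ follows from pairing the weak convergence against the constant function $g\equiv 1$. The substantive implications are $(3)\Rightarrow(1)$, which I address first, and $(2)\Rightarrow(3\mathrm{a})$, which I expect to be the main obstacle.

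For $(3)\Rightarrow(1)$, set $c:=\prod_{i=1}^k\int f_i\,d\mu$ and $u_n:=\prod_{i=1}^k T_i^n f_i-c$. My plan is to apply the uniform van der Corput lemma to reduce the target $\bigl\|\tfrac{1}{N-M}\sum_{n=M}^{N-1}u_n\bigr\|_{L^2(X)}\to 0$ to showing $\lim_{H\to\infty}\tfrac{1}{H}\sum_{h=1}^H\gamma(h)=0$, where $\gamma(h):=\lim_{N-M\to\infty}\tfrac{1}{N-M}\sum_{n=M}^{N-1}\langle u_n,u_{n+h}\rangle_{L^2(X)}$. Using the pointwise identity $T_i^n f_i\cdot T_i^{n+h}\bar f_i=T_i^n(f_i\cdot T_i^h\bar f_i)$, the leading term $\langle\prod_i T_i^n f_i,\prod_i T_i^{n+h} f_i\rangle$ equals $\int\prod_i T_i^n(f_i\cdot T_i^h\bar f_i)\,d\mu$. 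Expanding $\langle u_n,u_{n+h}\rangle$ and applying $(3\mathrm{b})$ to the tuple $(f_i\cdot T_i^h\bar f_i)_i$, and to $(f_i)_i$ for the cross terms involving $c$, produces $\gamma(h)=\prod_{i=1}^k\psi_i(h)-|c|^2$ with $\psi_i(h):=\langle f_i,T_i^h f_i\rangle_{L^2(X)}$.

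The Cesaro average in $h$ is then handled via $(3\mathrm{a})$. Setting $F:=f_1\otimes\dots\otimes f_k\in L^2(X^k,\mu^{\otimes k})$ and $T:=T_1\times\dots\times T_k$, Fubini gives $\prod_i\psi_i(h)=\langle F,T^h F\rangle_{L^2(X^k)}$. Since $T$ is ergodic by $(3\mathrm{a})$, the von Neumann mean ergodic theorem yields $\tfrac{1}{H}\sum_{h=0}^{H-1}T^h F\to\int F\,d\mu^{\otimes k}=c$ in $L^2(X^k)$, so pairing with $F$ gives $\tfrac{1}{H}\sum_h\prod_i\psi_i(h)\to|c|^2$; hence $\tfrac{1}{H}\sum_h\gamma(h)\to 0$ and van der Corput closes the estimate.

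For $(2)\Rightarrow(3\mathrm{a})$, which is the step I expect to be the hardest, I would argue by contradiction: if $T$ is not ergodic on $(X^k,\mu^{\otimes k})$, there is a non-constant invariant $F\in L^2(X^k)$ with $\int F\,d\mu^{\otimes k}=0$. Approximating $F$ in $L^2(X^k)$ by linear combinations of tensor products $f_1\otimes\dots\otimes f_k$ and exploiting $T^h F=F$, one should extract a tuple $(f_1,\dots,f_k)$ and a test function $g\in L^2(X)$ whose diagonal averages contradict the weak convergence assumed in (2). The difficulty is that the restriction of $L^2(X^k)$-functions to the diagonal $\{(x,\dots,x)\}$ is not directly well-defined, so the bridge between the $L^2(X^k)$-level invariance of $F$ and the $L^2(X)$-level Cesaro convergence in (2) must be constructed via carefully chosen integrated quantities and duality rather than by pointwise restriction.
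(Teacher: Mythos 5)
First, note that the paper does not prove this statement at all: it is quoted verbatim from Berend--Bergelson (\emph{Israel J. Math.} 56 (1986)), and the only implication the paper actually uses is $(3)\Rightarrow(1)$ (in the proof of Theorem \ref{multiple_uniform_average_disjoint}). Your treatment of $(1)\Rightarrow(2)$, $(2)\Rightarrow(3\mathrm{b})$, and $(3)\Rightarrow(1)$ is correct and complete. In particular the van der Corput computation is right: $(3\mathrm{b})$ applied to the tuple $\bigl(f_i\cdot\overline{T_i^h f_i}\bigr)_i$ gives $\gamma(h)=\prod_i\langle f_i,T_i^hf_i\rangle-|c|^2$, and $(3\mathrm{a})$ plus von Neumann applied to $F=f_1\otimes\cdots\otimes f_k$ and $T=T_1\times\cdots\times T_k$ kills the Ces\`aro average in $h$. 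This is essentially the classical argument, and it is the same van der Corput mechanism the paper itself uses in Lemma \ref{avg:wm}.

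The genuine gap is $(2)\Rightarrow(3\mathrm{a})$, which you explicitly leave open, and the route you sketch (approximate a nonconstant invariant $F\in L^2(X^k)$ by sums of tensor products and ``extract a tuple'') does not close: as you yourself observe, there is no restriction-to-the-diagonal map, and invariance of $F$ under $T_1\times\cdots\times T_k$ gives no direct control over the diagonal averages $\frac{1}{N-M}\sum_n\prod_iT_i^nf_i$ on $X$. The standard way to finish is via eigenfunctions rather than a generic invariant function. From $(2)$ with all but one $f_i$ constant, each $T_i$ is ergodic. If $T_1\times\cdots\times T_k$ is nevertheless not ergodic, then (using the Jacobs--de~Leeuw--Glicksberg splitting for each $U_i=T_i$ on $L^2(X,\mu)$, exactly as in Remark \ref{rem:add}: any product-invariant function is orthogonal to every tensor with a weakly mixing factor, hence lies in $\bigotimes_i\mathcal{H}^i_{\rm c}$, which for ergodic $T_i$ is spanned by tensors of eigenfunctions) there exist unimodular eigenfunctions $\phi_i$ with $T_i\phi_i=\lambda_i\phi_i$, $\prod_i\lambda_i=1$, and not all $\phi_i$ constant. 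Then $\prod_{i=1}^kT_i^n\phi_i=\bigl(\prod_i\lambda_i\bigr)^n\prod_i\phi_i=\prod_i\phi_i$ for every $n$, a function of modulus $1$ a.e., while any nonconstant $\phi_i$ has $\int\phi_i\,d\mu=0$, so $(2)$ would force $\prod_i\phi_i=0$ weakly --- a contradiction obtained entirely on $X$, with no diagonal restriction needed. Without this (or an equivalent spectral argument), your proof of the equivalence is incomplete.
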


We can prove Theorem \ref{multiple_uniform_average_disjoint} by applying Theorem \ref{Berend-Bergelson_criterion}. 
 
\begin{proof}[Proof of Theorem \ref{multiple_uniform_average_disjoint}]
Since each $T_i$ is ergodic and $T_1, \dots, T_k$ are mutually disjoint, it follows that $T_1\times \dots \times T_k$ is ergodic with respect to $\mu ^{\otimes k}$, and hence 3-(a) of Theorem \ref{Berend-Bergelson_criterion} is verified. 
Theorem \ref{unif_averag_mr_correlation_seq_disjoint} without $T_0$ implies 3-(b) of Theorem \ref{Berend-Bergelson_criterion}. 
Since the conclusion of Theorem~\ref{multiple_uniform_average_disjoint} is the same with 1 of Theorem ~\ref{Berend-Bergelson_criterion}, we complete the proof.
\end{proof}

The notions $L^2$-joint ergodicity and $L^2$-weak joint ergodicity are also equivalent, see \cite{Berend-Bergelson1986}*{Theorem  2.1}.  
However, the $L^2$-joint ergodicity does not imply the $L^2$-uniform joint ergodicity in general, see \cite{Berend-Bergelson1986}*{Example 3.1}. 
For commuting systems, Berend and Bergelson \cites{Berend1985, Berend-Bergelson1984} showed that all the four notions defined above are equivalent. 
The following result shows that disjoint systems possess all the four notions even if the systems are non-commutative. 

\begin{corollary} \label{dijoint_joint_ergodic} 
Let $(X,\mathcal{B} ,\mu)$ and $T_1,\dots ,T_k$ be as in Theorem \ref{multiple_uniform_average_disjoint}. 
Then it is $L^2$-uniformly jointly ergodic. 
\end{corollary}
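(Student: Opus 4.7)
The plan is to observe that Corollary \ref{dijoint_joint_ergodic} is essentially a translation of Theorem \ref{multiple_uniform_average_disjoint} into the newly introduced language: the conclusion of Theorem \ref{multiple_uniform_average_disjoint} is literally the convergence \eqref{def_uniform_joint_ergodic} defining $L^2$-uniform joint ergodicity. Thus the shortest route is simply to quote Theorem \ref{multiple_uniform_average_disjoint} and note that its assertion matches \eqref{def_uniform_joint_ergodic} verbatim for every $f_1,\dots,f_k\in L^\infty(X,\mu)$.

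To make the underlying mechanism visible, I would instead apply Theorem \ref{Berend-Bergelson_criterion} via the implication $3\Rightarrow 1$. For condition 3(a), I would argue that since each $T_i$ is ergodic, any ergodic component of the $T_1\times\cdots\times T_k$-ergodic decomposition of $\mu^{\otimes k}$ has marginals equal to $\mu$ on each factor (its $i$-th marginal is a $T_i$-ergodic measure and, by uniqueness of the ergodic decomposition of $\mu$ under $T_i$, must coincide with $\mu$); such a component therefore belongs to $\mathcal J(T_1,\dots,T_k)$, and mutual disjointness forces it to equal $\mu^{\otimes k}$. Hence $\mu^{\otimes k}$ is ergodic for $T_1\times\cdots\times T_k$. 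For condition 3(b), I would invoke Theorem \ref{unif_averag_mr_correlation_seq_disjoint} applied to $T_1,\dots,T_k$ (formally, by setting $T_0=\mathrm{id}_X$ and $f_0\equiv 1$, or simply by reading its proof, which never uses the extra transformation). Theorem \ref{Berend-Bergelson_criterion} then yields condition 1, which is the stated $L^2$-uniform joint ergodicity.

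I do not anticipate any real obstacle: all of the analytic content has already been carried out in Theorem \ref{multiple_uniform_average_disjoint} and Theorem \ref{unif_averag_mr_correlation_seq_disjoint}, and the corollary exists principally to record, via the equivalences catalogued in \cite{Berend-Bergelson1986}, that the three weaker notions ($L^2$-joint ergodicity, $L^2$-weak joint ergodicity, and $L^2$-weak uniform joint ergodicity) are likewise enjoyed by ergodic mutually disjoint systems with no commutativity assumption.
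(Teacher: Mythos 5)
Your proposal is correct and matches the paper exactly: the paper gives no separate proof of Corollary \ref{dijoint_joint_ergodic} because its conclusion is verbatim condition 1 of Theorem \ref{Berend-Bergelson_criterion}, i.e.\ the definition \eqref{def_uniform_joint_ergodic}, which is precisely what the proof of Theorem \ref{multiple_uniform_average_disjoint} establishes by verifying 3(a) (product ergodicity from ergodicity plus mutual disjointness) and 3(b) (via Theorem \ref{unif_averag_mr_correlation_seq_disjoint} without $T_0$). Your fleshed-out justification of 3(a) via ergodic components and their marginals is sound and consistent with the paper's brief assertion.
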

\subsubsection{Multiple Khintchin recurrence} \label{proof_th1.1}

\begin{proposition} \label{functional_kmr_disjoint_general}
Let $(X,\mathcal{B} ,\mu )$ be a standard Borel probability space. 
Given $k\in \mathbb{N}$, suppose that $T_0,T_1,\dots ,T_k$ be measure preserving transformations on $X$ such that $T_0, T_1, \dots, T_k$ are mutually disjoint. 
Then for every $\varepsilon >0$ and every $f_i\in L^\infty (X,\mu )$ with $f_i\geq 0$, $i\in \{0,1,\dots ,k\} $, the set 
\[
\left\{ n\in \mathbb{N} \colon \int _X \prod _{i=0}^kT_i^nf_i\, d\mu >\prod _{i=0}^k\int _{X}f_i\, d\mu -\varepsilon \right\} 
\]
is syndetic. 
\end{proposition}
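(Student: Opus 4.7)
The plan is to reduce the statement to Theorem~\ref{unif_averag_mr_correlation_seq_disjoint}, which already establishes the uniform Ces\`aro convergence of the relevant correlation sequence under the disjointness hypothesis, and then to extract syndeticity from this uniform convergence by a short contradiction argument.

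More precisely, I would set
\[
a_n = \int_X \prod_{i=0}^k T_i^n f_i\,d\mu \quad\text{and}\quad L = \prod_{i=0}^k \int_X f_i\,d\mu.
\]
Since each $f_i \geq 0$ and is bounded, $0 \leq a_n \leq \prod_{i=0}^k \|f_i\|_\infty < \infty$. Applying Theorem~\ref{unif_averag_mr_correlation_seq_disjoint} to the mutually disjoint transformations $T_0, T_1, \dots, T_k$ yields
\[
\frac{1}{N-M} \sum_{n=M}^{N-1} a_n \longrightarrow L \quad \text{as } N-M \to \infty,
\]
and crucially this convergence is uniform in the starting point $M$: for every $\delta > 0$ there exists $K_0 = K_0(\delta)$ such that whenever $N-M \geq K_0$, the Ces\`aro average lies within $\delta$ of $L$, regardless of $M$.

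Next I would argue by contradiction that the set $E = \{n \in \mathbb{N} : a_n > L - \varepsilon\}$ is syndetic. If $E$ were not syndetic, then for every $K \in \mathbb{N}$ there would exist $M_K \in \mathbb{N}$ with $\{M_K, M_K+1, \dots, M_K + K - 1\} \cap E = \emptyset$, so that $a_n \leq L - \varepsilon$ for every $n$ in this interval. Averaging over it gives
\[
\frac{1}{K} \sum_{n=M_K}^{M_K + K - 1} a_n \leq L - \varepsilon.
\]
But by the uniform convergence above applied with tolerance $\varepsilon/2$, for all $K \geq K_0(\varepsilon/2)$ and every $M$,
\[
\frac{1}{K} \sum_{n=M}^{M + K - 1} a_n > L - \varepsilon/2.
\]
Choosing $K \geq K_0(\varepsilon/2)$ contradicts the previous inequality, and therefore $E$ is syndetic.

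The main obstacle has already been surmounted inside Theorem~\ref{unif_averag_mr_correlation_seq_disjoint}: all the genuine ergodic-theoretic content (passage to a regular model, identifying weak$^*$ limits of the measures $\lambda_{M,N}$ via Lemma~\ref{disjoint_unif_criterion}, and approximating $L^\infty$ functions by continuous ones) is carried out there. What remains in the present proof is the soft and standard observation that uniform Ces\`aro convergence of a bounded sequence to $L$ forces the set $\{n : a_n > L - \varepsilon\}$ to be syndetic; this is where disjointness enters crucially, since it is precisely the uniformity (rather than mere Ces\`aro convergence) that prevents long runs of small values of $a_n$.
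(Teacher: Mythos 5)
Your proposal is correct and follows essentially the same route as the paper: the authors likewise assume the set fails to be syndetic, extract a sequence of intervals $[M_j,N_j)$ with $N_j-M_j\to\infty$ on which the correlation stays below $\prod_{i=0}^k\int_X f_i\,d\mu-\varepsilon$, and contradict the uniform Ces\`aro convergence supplied by Theorem~\ref{unif_averag_mr_correlation_seq_disjoint}. The only cosmetic difference is that you phrase the uniformity via a threshold $K_0(\delta)$ while the paper averages directly over the gap intervals; the substance is identical.
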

\begin{proof}
We may and do assume that %$f_i\neq 0$ for $\mu $-almost everywhere and for every $i\in \{ 0,1,\dots ,k\} $. 
it is not the case that for almost all $x$, $f_i(x) = 0$ for any $i\in \{ 0,1,\dots ,k\} $. 
Suppose that the set is not syndetic for some $\varepsilon _0>0$ and $f_i\in L^\infty (X,\mu )$ with $f_i\geq 0$, $i\in \{ 0,1,\dots ,k\} $. 
Then there exists a sequence of intervals $[M_j,N_j)$ with $N_j-M_j\to \infty $ as $j\to \infty $ such that for every $n\in [M_j,N_j)\cap \mathbb{N}$ and $j\in \mathbb{N} $, it holds that 
\[
\int _{X} \prod _{i=0}^kT_i^nf_i\, d\mu \leq \prod _{i=0}^k\int _{X}f_i\, d\mu -\varepsilon _0.
\]
It follows that 
\[
\frac{1}{N_j-M_j} \sum _{n=M_j}^{N_j-1} \left( \prod _{i=0}^k\int _{X}f_i\, d\mu -\int _{X} \prod _{i=0}^kT_i^nf_i\, d\mu \right) \geq \varepsilon
_0\]
for every $j\in \mathbb{N} $. 

On the other hand, by Theorem~\ref{unif_averag_mr_correlation_seq_disjoint}, we have 
\[
\lim _{N-M\to \infty } \frac{1}{N-M} \sum _{n=M}^{N-1} \int _{X}\prod _{i=0}^kT_i^nf_i\, d\mu =\prod _{i=0}^k\int _{X}f_i\, d\mu .
\]
This gives a contradiction, and hence the result follows.
\end{proof}

\begin{proof}[Proof of Theorem \ref{mrkhintchine_disjoint_general}]
Letting $f_i=\mathbbm{1} _A$ for every $i\in \{ 0,1,\dots ,k\} $ in Proposition \ref{functional_kmr_disjoint_general}, the result follows. 
\end{proof}

%\begin{proof}[Proof of Theorem \ref{mrkhintchine_disjoint}]
%By Lemma \ref{disjointness_criterion_ergodic_id}, we see that ${\rm id}_X,T_1,\dots ,T_k$ are mutually disjoint. 
%Applying Theorem \ref{mrkhintchine_disjoint_general} with $T_0={\rm id}_X$ yields the result. 
%\end{proof}

\subsection{Proof of Theorems \ref{recurrence:new} and \ref{metric_recurrence:new}}
\label{proof-1.1-1.5}

Before proving Theorems \ref{recurrence:new} and \ref{metric_recurrence:new}, let us introduce the following Hilbert space splitting.

\subsubsection{Hilbert space splitting: the Jacobs - de Leeuw - Glicksberg decomposition}
Let $U$ be an isometry on a Hilbert space $\mathcal{H}$. 
An element $f \in \mathcal{H}$ is {\em compact} if 
\[ \left\{ U^n f\colon n\in \mathbb{N} \cup \{0\} \right\} \]
is a pre-compact subset of $\mathcal{H}$. 
Then one has the following splitting theorem. (See, for example, \cite{Krengel1985}.)

\begin{theorem} Let $U$ be an isometry on a Hilbert space $\mathcal{H}$. 
Then
\[\mathcal{H} = \mathcal{H}_{\rm c} \oplus \mathcal{H}_{\rm wm},\]
where 
\[ \mathcal{H}_{\rm c}= \{f \in \mathcal{H} \colon f \text{ is compact } \} \]
and 
\[ 
\mathcal{H}_{\rm wm} = \left\{ f \in \mathcal{H} \colon \frac{1}{N} \sum_{n=0}^{N-1} | \langle U^n f, g  \rangle | \xrightarrow[N\to \infty ]{} 0\text{ for all } g \in \mathcal{H}\right\} .
\]
\end{theorem}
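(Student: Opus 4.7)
The plan is to establish the decomposition in three stages: verify both subspaces are closed and $U$-invariant, prove mutual orthogonality, and finally show the algebraic sum spans $\mathcal{H}$. I would first check that $\mathcal{H}_{\rm c}$ is a closed $U$-invariant linear subspace. The invariance $U\mathcal{H}_{\rm c} \subseteq \mathcal{H}_{\rm c}$ is immediate since $\{U^{n+1}f\}$ is a subset of the precompact set $\{U^n f\}$, and the subspace property follows because the sum of two precompact orbits is precompact. For closedness, if $f_k \in \mathcal{H}_{\rm c}$ with $f_k \to f$ in norm, take any sequence $\{U^{n_j} f\}$; a diagonal extraction produces a subsequence $(n_{j_\ell})$ along which $\{U^{n_{j_\ell}} f_k\}$ converges for every fixed $k$, and the isometry identity $\|U^m (f - f_k)\| = \|f - f_k\|$ upgrades this to norm-convergence of $\{U^{n_{j_\ell}} f\}$, so $f \in \mathcal{H}_{\rm c}$. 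The subspace property, closedness, and $U$-invariance of $\mathcal{H}_{\rm wm}$ follow at once from the estimate $|\langle U^n f, g\rangle - \langle U^n f_k, g\rangle| \le \|f - f_k\|\,\|g\|$ under the Ces\`aro average.

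For orthogonality, fix $f \in \mathcal{H}_{\rm c}$ and $g \in \mathcal{H}_{\rm wm}$. The isometry property gives $\langle f, g\rangle = \langle U^n f, U^n g\rangle$ for every $n$, hence
\[
|\langle f, g\rangle| = \frac{1}{N}\sum_{n=0}^{N-1}|\langle U^n f, U^n g\rangle|.
\]
Given $\varepsilon > 0$, choose a finite $\varepsilon$-net $\{U^{n_1}f,\ldots,U^{n_K}f\}$ for the precompact orbit of $f$; for each $n$ pick $i(n)$ with $\|U^n f - U^{n_{i(n)}}f\| < \varepsilon$, so that
$|\langle U^n f, U^n g\rangle| \le |\langle U^{n_{i(n)}}f, U^n g\rangle| + \varepsilon\|g\| \le \sum_{i=1}^K|\langle U^{n_i}f, U^n g\rangle| + \varepsilon\|g\|.$
Averaging in $n$, each of the $K$ Ces\`aro sums tends to zero by $g \in \mathcal{H}_{\rm wm}$ applied with test vector $U^{n_i}f$, yielding $|\langle f,g\rangle| \le \varepsilon\|g\|$. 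Letting $\varepsilon \to 0$ gives $\langle f,g\rangle = 0$.

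The main obstacle is showing that the two subspaces span $\mathcal{H}$, equivalently that $\mathcal{H}_{\rm c}^{\perp} \subseteq \mathcal{H}_{\rm wm}$. My plan is to dilate $U$ to a unitary $\widetilde{U}$ on a larger Hilbert space $\widetilde{\mathcal{H}} \supseteq \mathcal{H}$ via the Sz.-Nagy dilation, then apply the spectral theorem $\widetilde{U} = \int_{\mathbb{T}} z\,dE(z)$. For each $f \in \mathcal{H}$, the scalar spectral measure $\mu_f(\cdot) = \langle E(\cdot)f, f\rangle$ decomposes uniquely into pure-point and continuous parts $\mu_f = \mu_f^{\mathrm{pp}} + \mu_f^{\mathrm{c}}$, inducing an orthogonal splitting $f = f^{\mathrm{pp}} + f^{\mathrm{c}}$. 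The closed span of eigenvectors lies in $\mathcal{H}_{\rm c}$, since an eigenvector $\widetilde{U}h = \lambda h$ with $|\lambda|=1$ has orbit contained in the compact circle $\mathbb{T}\cdot h$, and precompactness is preserved under finite sums and norm-closure. For $f^{\mathrm{c}}$, Wiener's lemma $\frac{1}{N}\sum_{n=0}^{N-1}|\widehat{\mu}(n)|^2 \to \sum_z |\mu(\{z\})|^2$ applied to the continuous cross spectral measure $\mu_{f^{\mathrm{c}},g}$, combined with Cauchy-Schwarz on the Ces\`aro average, yields $\frac{1}{N}\sum_{n=0}^{N-1}|\langle \widetilde{U}^n f^{\mathrm{c}}, g\rangle| \to 0$ for every $g$, placing $f^{\mathrm{c}} \in \mathcal{H}_{\rm wm}$. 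Combining the two parts yields the desired decomposition.
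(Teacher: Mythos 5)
The paper does not prove this theorem at all --- it simply cites Krengel's book --- so there is no in-paper argument to compare against; I will assess your proof on its own terms. Your first two stages are correct: the closedness and $U$-invariance of $\mathcal{H}_{\rm c}$ and $\mathcal{H}_{\rm wm}$, and the orthogonality argument via an $\varepsilon$-net for the precompact orbit together with $\langle f,g\rangle=\langle U^nf,U^ng\rangle$, are all sound (note that $\|U^ng\|=\|g\|$ is what makes the net estimate uniform in $n$).

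The spanning step, however, has a genuine gap. You extend $U$ to a unitary $\widetilde U$ on $\widetilde{\mathcal H}\supseteq\mathcal H$ and split $f=f^{\mathrm{pp}}+f^{\mathrm{c}}$ according to the spectral measure of $\widetilde U$. But this splitting is an orthogonal decomposition \emph{inside $\widetilde{\mathcal H}$}, and nothing in your argument shows that $f^{\mathrm{pp}}$ and $f^{\mathrm{c}}$ lie in $\mathcal H$. The subspace $\mathcal H$ is $\widetilde U$-invariant but not reducing (it is not $\widetilde U^{-1}$-invariant unless $U$ was already unitary), so the spectral projections of $\widetilde U$ do not commute with the projection onto $\mathcal H$, and when you write ``the closed span of eigenvectors lies in $\mathcal H_{\rm c}$'' you are tacitly asserting that eigenvectors of $\widetilde U$ lie in $\mathcal H$ --- which is exactly the unproven point. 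Without it you obtain the Jacobs--de~Leeuw--Glicksberg decomposition of $\widetilde{\mathcal H}$ relative to $\widetilde U$, not of $\mathcal H$ relative to $U$. The statement you need is in fact true for the \emph{minimal} unitary extension, and there are two standard ways to supply it: (i) first apply the Wold decomposition $\mathcal H=\mathcal H_u\oplus\mathcal H_s$, observe that on the shift part $\langle U^nf,g\rangle\to 0$ for all $g$ (so $\mathcal H_s\subseteq\mathcal H_{\rm wm}$ and $\mathcal H_s\perp\mathcal H_{\rm c}$), and run your spectral argument only on the unitary part $\mathcal H_u$, where no extension is needed; or (ii) show directly that any eigenvector $h$ of the minimal extension satisfies $\|P_{\mathcal H}h\|=\|h\|$ (using $P_{\widetilde U^{-n}\mathcal H}h=\lambda^n\widetilde U^{-n}P_{\mathcal H}h$ and $\widetilde U^{-n}\mathcal H\uparrow\widetilde{\mathcal H}$), hence $h\in\mathcal H$, so that $f^{\mathrm{pp}}$ and therefore $f^{\mathrm{c}}$ both lie in $\mathcal H$. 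With either repair, the rest of your Wiener-lemma argument for $f^{\mathrm{c}}$ and the precompactness argument for $f^{\mathrm{pp}}$ go through.
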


\begin{remark}
\label{rem:add} One can check 
\[ \mathcal{H}_{\rm c} = \overline{\text{span} \{f \in \mathcal{H} \colon Uf = \lambda f \text{ for some } \lambda \in  \mathbb{C} \text{ with } |\lambda|=1  \} }. \]
\end{remark}

Now we will prove the following lemma, which will be used later. 
\begin{lemma}
\label{lem:add}
Let $T$ be a measure preserving transformation on a probability space $(X, \mathcal{B}, \mu)$. 
We also regard $T$ as an isometry on $\mathcal{H}= L^2(X,\mu )$ by $Tf = f \circ T$.
For a measurable set $A \in \mathcal{B}$, write 
\[\mathbbm{1} _A = f + g,\]
where $f \in \mathcal{H}_{\rm c}$ and $g \in \mathcal{H}_{\rm wm}$. 
Then
\begin{enumerate}
\item $ 0 \leq f(x) \leq 1 $ for $\mu$-almost every $x\in X$.
\item $ \int \mathbbm{1} _{A}\cdot f \, d\mu \geq \mu (A)^2$. 
\item $f(x) > 0$ for $\mu$-almost every $x \in A$.
\end{enumerate}
\end{lemma}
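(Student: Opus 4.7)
The plan is to identify $\mathcal{H}_{\rm c}$ with the $L^2$-space of a sub-$\sigma$-algebra. More precisely, I will argue that $\mathcal{H}_{\rm c}$ coincides with $L^2(X,\mathcal{K},\mu)$ for some $T$-invariant sub-$\sigma$-algebra $\mathcal{K}\subseteq\mathcal{B}$ (the \emph{Kronecker factor}), so that the orthogonal projection onto $\mathcal{H}_{\rm c}$ agrees with the conditional expectation $\mathbb{E}(\,\cdot\,|\,\mathcal{K})$. With this in hand, $f = \mathbb{E}(\mathbbm{1}_A \mid \mathcal{K})$, and the three conclusions follow from standard properties of conditional expectation.

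For (1), I will invoke the fact that conditional expectation is a positive contraction on $L^\infty$ preserving a.e.\ bounds: from $0 \le \mathbbm{1}_A \le 1$ one immediately gets $0 \le f \le 1$ a.e. For (2), the constants lie in $\mathcal{H}_{\rm c}$, so $g \perp 1$, yielding $\int f\,d\mu = \int \mathbbm{1}_A\,d\mu = \mu(A)$. Because $f$ is $\mathcal{K}$-measurable and bounded,
\[
\int \mathbbm{1}_A\,f\,d\mu \;=\; \int \mathbb{E}(\mathbbm{1}_A \mid \mathcal{K})\cdot f\,d\mu \;=\; \int f^2\,d\mu,
\]
and Cauchy--Schwarz delivers $\int f^2\,d\mu \ge \bigl(\int f\,d\mu\bigr)^2 = \mu(A)^2$. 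For (3), set $C = \{f=0\}$, which is $\mathcal{K}$-measurable; then
\[
\mu(A\cap C) \;=\; \int_C \mathbbm{1}_A\,d\mu \;=\; \int_C \mathbb{E}(\mathbbm{1}_A \mid \mathcal{K})\,d\mu \;=\; \int_C f\,d\mu \;=\; 0,
\]
so $f > 0$ almost everywhere on $A$.

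The main obstacle is justifying the identification $\mathcal{H}_{\rm c} = L^2(X,\mathcal{K},\mu)$. The heuristic reason is that, by Remark \ref{rem:add}, $\mathcal{H}_{\rm c}$ is spanned by the modulus-one eigenfunctions of $T$; such eigenfunctions are bounded (indeed of a.e.\ constant modulus on each ergodic component, as $|T\phi|=|\phi|$ is $T$-invariant), and the product of two eigenfunctions is again an eigenfunction, so $\mathcal{H}_{\rm c}\cap L^\infty(X,\mu)$ is a $T$-invariant von Neumann subalgebra of $L^\infty(X,\mu)$, which corresponds to a sub-$\sigma$-algebra $\mathcal{K}$ with $L^2(X,\mathcal{K},\mu) = \mathcal{H}_{\rm c}$. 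In the ergodic setting this is the classical Kronecker factor; in the non-ergodic case one can reduce via the ergodic decomposition. Once this identification is in place, (1)--(3) are immediate from the conditional-expectation computations above.
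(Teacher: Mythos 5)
Your argument is correct in substance but takes a genuinely different route from the paper. You reduce everything to the identification $\mathcal{H}_{\rm c}=L^2(X,\mathcal{K},\mu)$ for the Kronecker sub-$\sigma$-algebra $\mathcal{K}$, after which $P$ becomes the conditional expectation $\mathbb{E}(\,\cdot\,|\,\mathcal{K})$ and (1)--(3) are one-line consequences; your derivation of (3) via $\mu(A\cap\{f=0\})=\int_{\{f=0\}}f\,d\mu=0$ is in fact cleaner than the paper's. The paper instead works directly with the orthogonal projection onto $\mathcal{H}_{\rm c}$ and never invokes the factor structure: for (1) it uses the best-approximation property together with the closure of $\mathcal{H}_{\rm c}$ under conjugation and under post-composition with the $1$-Lipschitz truncation $t\mapsto\max(\min(t,1),0)$ (immediate from the ``precompact orbit'' definition of $\mathcal{H}_{\rm c}$, since $U^n\phi(f)=\phi(U^nf)$ and Lipschitz maps preserve precompactness); for (2) it uses $P=P^*=P^2$ and Cauchy--Schwarz against the constant function $1$ (exactly your computation, phrased without $\mathcal{K}$); and for (3) it applies (1)--(2) to $B=A\cap\{f=0\}$ and derives a contradiction from a positivity comparison of $P\mathbbm{1}_B$ and $P\mathbbm{1}_{A\setminus B}$. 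What your approach buys is conceptual transparency and reusability of the factor $\mathcal{K}$; what the paper's buys is self-containedness, avoiding the one nontrivial input you need. On that input, two caveats: your assertion that unimodular eigenfunctions are bounded is false without ergodicity ($|\varphi|$ is merely $T$-invariant, hence constant on ergodic components but possibly unbounded), so you must truncate over the invariant sets $\{|\varphi|\le M\}$ to get a dense set of bounded eigenfunctions; and the passage from ``$\mathcal{H}_{\rm c}\cap L^\infty$ is a conjugation-closed unital algebra dense in $\mathcal{H}_{\rm c}$'' to ``$\mathcal{H}_{\rm c}=L^2(X,\mathcal{K},\mu)$'' requires a functional monotone class argument. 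Both points are standard (this is the classical almost-periodic/Kronecker factor), so I would count your proposal as complete modulo citable facts rather than as having a genuine gap.
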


\begin{proof} 
Without loss of generality, we assume that $\mu (A) > 0$. 
\begin{enumerate}
\item Suppose that $f = f_1 + i f_2$. 
Note that $f_1, f_2 \in \mathcal{H}_c$ since $f_1 = \frac{f+ \overline{f}}{2}$ and $f_2 = \frac{f - \overline{f}}{2i}$. 
Moreover we have that $\| f_1 - \mathbbm{1} _{A} \|_2 \leq \| f- \mathbbm{1} _A\|_2$. 
The fact that $f$ is an orthogonal projection of $\mathbbm{1} _A$ implies that $f= f_1$, so $f$ is real-valued. 
 
Now let $\tilde{f} = \max (\min(f,1), 0)$. 
Again one can check that $\tilde{f} \in \mathcal{H}_c$ and $\| \tilde{f} - \mathbbm{1} _{A} \|_2 \leq \| f- \mathbbm{1} _A\|_2 $.  
So $f = \tilde{f}$.

\item Let $P\colon \mathcal{H} \rightarrow \mathcal{H}_{\rm c}$ be the orthogonal projection. 
Then $P^2 = P$ and $P^*$, the adjoint of $P$, is $P$. 
Thus one has
\[
\langle P\mathbbm{1} _A, 1 \rangle = \langle P^2 \mathbbm{1} _A, 1 \rangle = \langle P\mathbbm{1} _A, P1 \rangle = \langle \mathbbm{1} _A, 1 \rangle = \mu (A).
\]
Also note that
\[ 
\int \mathbbm{1} _A\cdot f \, d\mu = \langle \mathbbm{1} _A, P\mathbbm{1} _A \rangle = \langle P\mathbbm{1} _A, P\mathbbm{1} _A \rangle  \geq  \langle P\mathbbm{1} _A,1 {\rangle}^2 = \mu(A)^2.
\]

\item Let $B = \{x \in A\colon f(x) = 0\}$. 
Suppose that $\mu(B)>0$.
Write 
\[\mathbbm{1} _{B} = F + G,\]
where $F \in \mathcal{H}_{\rm c}$ and $G \in \mathcal{H}_{\rm wm}$.
We will show that $F(x) = 0$ for $\mu$-almost every $x \in B$ and this leads to a contradiction: 
\[
0 = \int \mathbbm{1} _{B}\cdot F\, d\mu \geq \mu(B)^2 >0.
\]
Let us show that $F(x) = 0$ on $B$. 
First, note that $F(x) \geq 0$ for $\mu$-almost every $x\in X$, since $P \mathbbm{1}_B = F$.
Now write $\mathbbm{1} _{A \setminus B} = (f- F) + (g-G)$. 
Note that $f- F \in \mathcal{H}_{\rm c}$ and $g-G \in \mathcal{H}_{\rm wm}$. 
Thus $P \mathbbm{1}_{A \setminus B} = f- F$, so $f(x) - F(x) \geq 0$  for $\mu$-almost every $x\in X$. Then for $x \in B$,  $f(x) = 0$, so we have that $F(x) \leq 0$ for $\mu$-almost every $x \in B$. 
\end{enumerate}
Lemma \ref{lem:add} is proved. 
\end{proof}
\subsubsection{Multiple recurrence}

Let us consider the Jacobs-de Leeuw-Glicksberg decomposition of $\mathcal{H}= L^2(X,\mu )$ for each $T_i$ $(1 \leq i \leq k)$, that is, 
\[
\mathcal{H} = \mathcal{H}_{\rm c}^{i} \oplus \mathcal{H}_{\rm wm}^i.
\]
Write $\mathbbm{1} _{A} = f_i + g_i$ for $1 \leq i \leq k$, where $f_i \in \mathcal{H}_{\rm c}^i$ and $g_i \in \mathcal{H}_{\rm wm}^i$. 

Before proving Theorem \ref{recurrence:new}, let us first show the following lemma.
\begin{lemma}
\label{avg:wm}
If $u_1, \dots, u_k \in L^{\infty}(X)$ are real-valued functions and one of $u_i \in \mathcal{H}_{\rm wm}^i$, then 
\[ \lim_{N - M \rightarrow \infty} \frac{1}{N - M} \sum_{n=M}^{N -1} \prod_{i=1}^k T_i^n u_i = 0\]
in $L^2(X,\mu )$.
\end{lemma}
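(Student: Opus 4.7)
The plan is to use a uniform van der Corput argument together with Theorem~\ref{unif_averag_mr_correlation_seq_disjoint}. Without loss of generality I assume $u_1 \in \mathcal{H}_{\rm wm}^1$, and set
\[
v_n := \prod_{i=1}^k T_i^n u_i \in L^{\infty}(X,\mu) \subset L^2(X,\mu),
\]
which is bounded in $L^2$-norm by $\prod_i \|u_i\|_{\infty}$. The uniform van der Corput lemma asserts that, for any bounded sequence in a Hilbert space,
\[
\limsup_{N-M \to \infty} \left\| \frac{1}{N-M} \sum_{n=M}^{N-1} v_n \right\|_2^2 \leq \limsup_{H \to \infty} \frac{1}{H} \sum_{h=1}^{H} \limsup_{N-M\to \infty} \left| \frac{1}{N-M} \sum_{n=M}^{N-1} \langle v_{n+h}, v_n \rangle \right|,
\]
so it suffices to control these inner limits for each fixed $h$.

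The key identity is that, since each $u_i$ is real-valued, pointwise one has $(T_i^{n+h} u_i)(T_i^n u_i) = T_i^n(u_i \cdot T_i^h u_i)$; setting $w_i^h := u_i \cdot T_i^h u_i \in L^{\infty}(X,\mu)$, this gives
\[
\langle v_{n+h}, v_n \rangle = \int_X \prod_{i=1}^k T_i^n w_i^h \, d\mu.
\]
Applying Theorem~\ref{unif_averag_mr_correlation_seq_disjoint} (which is valid for any collection of mutually disjoint transformations $T_1,\dots,T_k$, by a trivial relabeling of its indices) to the $k$ functions $w_1^h,\dots,w_k^h \in L^{\infty}(X,\mu)$, the inner limit exists and equals
\[
\gamma(h) := \prod_{i=1}^k \int_X w_i^h \, d\mu = \prod_{i=1}^k \langle u_i, T_i^h u_i \rangle.
\]

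To conclude, I will estimate $|\gamma(h)| \leq |\langle u_1, T_1^h u_1 \rangle| \cdot \prod_{i=2}^k \|u_i\|_{\infty}^2$. Since $u_1 \in \mathcal{H}_{\rm wm}^1$, the defining property of $\mathcal{H}_{\rm wm}^1$ (applied with $g = u_1$) yields
\[
\frac{1}{H} \sum_{h=1}^{H} |\langle u_1, T_1^h u_1 \rangle| \xrightarrow[H\to \infty]{} 0,
\]
so $\limsup_H \frac{1}{H} \sum_{h=1}^H |\gamma(h)| = 0$, and the van der Corput bound forces the average to tend to $0$ in $L^2$.

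The main leverage in the argument is Theorem~\ref{unif_averag_mr_correlation_seq_disjoint}: it is precisely what collapses each $k$-variable correlation along $h$ into a clean product of $k$ one-variable correlations, after which the weak-mixing factor $\langle u_1, T_1^h u_1 \rangle$ decays in Cesàro mean and drags the whole product down. The only technical point worth recording is that $w_i^h = u_i \cdot T_i^h u_i$ indeed lies in $L^{\infty}(X,\mu)$ whenever $u_i$ does, so the hypothesis of Theorem~\ref{unif_averag_mr_correlation_seq_disjoint} is met without difficulty.
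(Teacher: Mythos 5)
Your proof is correct and follows essentially the same route as the paper: the same van der Corput reduction, the same identity $\langle v_{n+h}, v_n\rangle = \int_X \prod_{i=1}^k T_i^n(u_i\cdot T_i^h u_i)\, d\mu$, the same appeal to Theorem~\ref{unif_averag_mr_correlation_seq_disjoint} to identify the inner limit as $\prod_{i=1}^k \langle u_i, T_i^h u_i\rangle$, and the same use of the weak-mixing component to kill the Ces\`aro average in $h$. The only cosmetic difference is that you place an absolute value inside the van der Corput bound where the paper uses $\mathrm{Re}\langle x_{n+h}, x_n\rangle$; since $\mathrm{Re}\, z \leq |z|$, this changes nothing.
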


We need the following uniform version of the van der Corput lemma. (See lemma on p. 446 in \cite{Bergelson-Leibman2002})
\begin{lemma}[Van der Corput trick] \label{vdC}
Let $(x_n)_{n \in \mathbb{N}}$ be a bounded sequence in a Hilbert space $\mathcal{H}$. 
Then 
\[
\limsup_{N - M \rightarrow \infty} \left\| \frac{1}{N - M} \sum_{n=M}^{N-1} x_n \right\| ^2 \leq  \limsup_{H \rightarrow \infty} \frac{1}{H} \sum_{h=1}^H \limsup_{N - M \rightarrow \infty} \frac{1}{N -M } \sum_{n=M}^{N-1} \mathrm{Re} \langle x_{n+h}, x_n \rangle .
\]
\end{lemma}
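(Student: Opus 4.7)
The plan is to prove this uniform van der Corput inequality by the standard three-step argument (interval averaging, Cauchy--Schwarz, expansion of the squared norm), with careful bookkeeping of the two-parameter limit $N-M\to\infty$ and the boundary corrections from reindexing.

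First, fix $H\in\mathbb{N}$ and replace the outer Cesàro average by the inner-averaged sequence $\bar x_n := \frac{1}{H}\sum_{h=0}^{H-1} x_{n+h}$. Switching the order of summation in $n$ and $h$ makes $\sum_{n=M}^{N-1}(x_n-\bar x_n)$ telescope up to $O(H)$ boundary terms of bounded norm, so $\frac{1}{N-M}\sum_{n=M}^{N-1} x_n = \frac{1}{N-M}\sum_{n=M}^{N-1} \bar x_n + O(H/(N-M))$, which is negligible as $N-M\to\infty$ for fixed $H$. Next, by convexity of $\|\cdot\|^2$,
\[
\left\|\frac{1}{N-M}\sum_{n=M}^{N-1}\bar x_n\right\|^2 \leq \frac{1}{(N-M)H^2}\sum_{n=M}^{N-1}\sum_{h_1,h_2=0}^{H-1}\langle x_{n+h_1}, x_{n+h_2}\rangle.
\]
Writing $C:=\sup_n\|x_n\|$, the diagonal $h_1=h_2$ contributes at most $C^2/H$. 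Pairing $(h_1,h_2)$ with $(h_2,h_1)$ off-diagonal gives $2\,\mathrm{Re}\langle x_{n+h_1},x_{n+h_2}\rangle$; for $h_1<h_2$, setting $h=h_2-h_1$ and reindexing $m=n+h_1$ converts each term, up to an $O(H/(N-M))$ boundary correction, to $\frac{1}{N-M}\sum_{m=M}^{N-1}\mathrm{Re}\langle x_m, x_{m+h}\rangle$. Counting the $H-h$ pairs with given $h\in\{1,\ldots,H-1\}$ and taking $\limsup_{N-M\to\infty}$ yields
\[
\limsup_{N-M\to\infty}\left\|\frac{1}{N-M}\sum_{n=M}^{N-1}x_n\right\|^2 \leq \frac{C^2}{H} + \frac{2}{H^2}\sum_{h=1}^{H-1}(H-h)\,a_h,
\]
where $a_h := \limsup_{N-M\to\infty}\frac{1}{N-M}\sum_{n=M}^{N-1}\mathrm{Re}\langle x_{n+h}, x_n\rangle$.

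The final step is to show $\limsup_{H\to\infty}\frac{2}{H^2}\sum_{h=1}^{H-1}(H-h) a_h \leq L$, where $L := \limsup_{H\to\infty}\frac{1}{H}\sum_{h=1}^H a_h$. By Abel summation, $\sum_{h=1}^{H-1}(H-h)a_h = \sum_{j=1}^{H-1} T_j$ with $T_j = \sum_{h=1}^j a_h$. Given $\varepsilon>0$, $T_j\leq j(L+\varepsilon)$ for all $j$ sufficiently large, so $\frac{2}{H^2}\sum_{j=1}^{H-1} T_j \leq L+\varepsilon+o(1)$ as $H\to\infty$. Letting $H\to\infty$ then $\varepsilon\to 0$, combined with the $C^2/H$ term vanishing, gives the lemma.

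The main obstacle is this last passage: the triangular weights $2(H-h)/H^2$ do not directly match the uniform weights $1/H$ appearing on the right-hand side, so one must use Abel summation (or split the range of $h$ into small/large parts) to show both weighted averages have the same $\limsup$. A secondary technical concern is to ensure the various $O(H/(N-M))$ boundary errors from telescoping and reindexing genuinely disappear under $\limsup_{N-M\to\infty}$, taken with $H$ fixed, before the final $H\to\infty$ limit.
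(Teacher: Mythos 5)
Your proof is correct. The paper does not prove this lemma at all---it simply cites the lemma on p.~446 of \cite{Bergelson-Leibman2002}---and your argument (smoothing by the $H$-fold shifted average, Cauchy--Schwarz, reindexing with $O(H/(N-M))$ boundary errors, and the Abel-summation step converting the triangular weights $2(H-h)/H^2$ into the uniform weights $1/H$) is a complete and accurate rendering of the standard proof given in that reference.
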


\begin{proof}[Proof of Lemma \ref{avg:wm}]
Let $x_n =  \prod_{i=1}^k T_i^n u_i $. 
Then 
\[ \langle x_{n+h}, x_n \rangle = \int \prod_{i=1}^k T_i^n (u_i\cdot T_i^h u_i)\, d\mu . \]
Use Theorem \ref{unif_averag_mr_correlation_seq_disjoint} to obtain that 
\[ \lim_{N - M \rightarrow \infty} \frac{1}{N - M} \sum_{n=M}^{N-1} \langle x_{n+h}, x_n \rangle = \prod_{i=1}^k \int u_i\cdot T_i^h u_i\, d\mu .\]
If $u_i \in \mathcal{H}_{\rm wm}^i$, then 
\[ \lim_{H \rightarrow \infty} \frac{1}{H} \sum_{h=1}^H  \left| \int u_i\cdot T_i^h u_i\, d\mu \right| = 0, \]
so we have that
\[ \lim_{H \rightarrow \infty} \frac{1}{H} \sum_{h=1}^H  \left| \prod_{i=1}^k \int u_i\cdot T_i^h u_i\, d\mu  \right| = 0.\]
By Lemma \ref{vdC}, the result follows.
\end{proof}

\begin{proof}[Proof of Theorem \ref{recurrence:new}]
For each $1 \leq i \leq k$, we have $\mathbbm{1} _{A} = f_i + g_i$ as above. 
Then 
\[ \mu(A \cap T_1^{-n}A \cap \cdots \cap T_k^{-n}A) =  \left\langle \mathbbm{1} _A, \prod_{i=1}^{k} \left( T_i^n f_i + T_i^n g_i\right) \right\rangle _{L^2(X,\mu)}\]
If we multiply out the product of the right-hand side, there will be $2^{k}$ terms of the form
\[ T_1^n u_1(x)\cdot \, \cdots \, \cdot T_k^n u_k(x),\]
 where $u_i = f_i$ or $g_i$. 
Among these expressions, except the case that $u_i = f_i$ for all $i\in \{ 1,\dots ,k\}$, Lemma \ref{avg:wm} implies that 
\[ \lim_{N-M \rightarrow \infty} \frac{1}{N-M} \sum_{n=M}^{N-1} \prod_{i=1}^k T_i^n u_i = 0\]
 in $L^2(X,\mu )$. 
Thus we have that 
\begin{align*}
\liminf_{N - M \rightarrow \infty} \frac{1}{N - M} \sum_{n=M}^{N-1} \mu(A \cap T_1^{-n} A \cap \cdots \cap T_k^{-n} A) 
& = \liminf_{N - M \rightarrow \infty} \frac{1}{N - M} \sum_{n=M}^{N-1} \left\langle \mathbbm{1} _A, \prod_{i=1}^{k} T_i^n f_i\right\rangle _{L^2(X,\mu)} \\
&= \liminf_{N - M \rightarrow \infty} \frac{1}{N - M} \sum_{n=M}^{N-1} \int_A \prod_{i=1}^{k} T_i^n f_i\, d \mu . 
\end{align*}

 Note that for each $i\in \{ 1,\dots ,k\}$, we have $f_i(x)>0$ for $\mu $-almost every $x\in A$ by Lemma \ref{lem:add}, so $\int _A\prod _{i=1}^kf_i\, d\mu > 0$. 
Set $c_0= \frac{1}{2} \int _A\prod _{i=1}^kf_i\, d\mu >0$, and take any $\varepsilon \in (0, c_0)$. 
Given $\delta \in (0,\varepsilon /k)$, one can show that the set
\[ S = \left\{ n\in \mathbb{N} \colon \| T_i^n f_i - f_i \|_2 < \delta \ \text{ for all } i=1,\dots ,k\right\} \]
is syndetic by using Remark \ref{rem:add}. 
To see this, note first that for each $i\in \{ 1,\dots ,k\}$, one can find $F_i\in L^2(X,\mu )$ of the form $F_i=\sum_{j=1}^{l_i} a_{i,j}g_{i,j}$ for some $l_i\in \mathbb{N}$, some $a_{i,1},\dots ,a_{i,l_i}\in \mathbb{C} $, and some (normalized) eigenfunctions $g_{i,1}, \cdots , g_{i,l_i}\in L^2(X,\mu )$ corresponding to eigenvalues $\lambda _{i,1},\dots ,\lambda _{i,l_i}\in \{ z\in \mathbb{C} \colon |z|=1\}$ such that $\left\| f_i - F_i\right\| _2 <\delta /3$. 
Let $a=\max _{1\leq i\leq k}\sum _{j=1}^{l_i}|a_{i,j}|$. 
Then the following set 
\[ E=\left\{ n\in \mathbb{N} \colon |\lambda_{i,j}^n - 1|<\delta /(3a) \ \text{ for all } i=1,\dots ,k \text{ and } j=1,\dots ,l_i \right\} \]
is contained in $S$. 
Indeed, for every $n\in E$, one has 
\begin{align*}
\| T_i^n f_i - f_i \|_2
&\leq \| T_i^n f_i - T_i^nF_i \|_2+\| T_i^n F_i - F_i \|_2+\| F_i - f_i \|_2 \\
&=\| f_i-F_i \|_2+\| T_i^n F_i - F_i \|_2+\| F_i - f_i \|_2<\frac{\delta}{3} +\frac{\delta}{3} +\frac{\delta}{3} =\delta 
\end{align*}
as 
\begin{align*}
\| T_i^n F_i - F_i \|_2&=\left\| \sum_{j=1}^{l_i} a_{i,j}(\lambda _{i,j}^n-1)g_{i,j}\right\| _2\leq \sum_{j=1}^{l_i} |a_{i,j}||\lambda _{i,j}^n-1|\| g_{i,j}\| _2<\frac{\delta}{3} 
\end{align*}
for every $i\in \{ 1,\dots ,k\}$.
Since $E$ is syndetic, so is $S$. 

For $n\in S$, we have by Lemma \ref{lem:add} and the Schwarz inequality that
\begin{align*}
\left| \int _A\prod _{i=1}^kT_i^n f_i\, d\mu -\int _A\prod _{i=1}^kf_i\, d\mu \right|
&\leq 
\int _A\left| \prod _{i=1}^kT_i^n f_i-f_1\prod _{i=2}^kT_i^nf_i\right|\, d\mu +\int _A\left| f_1\prod _{i=2}^kT_i^n f_i-f_1\cdot f_2\prod _{i=3}^kT_i^nf_i\right|\, d\mu \\
&\quad +\cdots +\int _A\left| \left( \prod _{i=1}^{k-1}f_i\right) \cdot T_k^nf_k -\prod _{i=1}^kf_i\right|\, d\mu \\
&\leq \int _A\left| T_1^n f_1-f_1\right| \prod _{i=2}^k\left| T_i^nf_i\right|\, d\mu +\int _A|f_1| \left| T_2^n f_2-f_2\right| \prod _{i=3}^k\left| T_i^nf_i\right|\, d\mu \\
&\quad +\cdots +\int _A\prod _{i=1}^{k-1}\left| f_i\right| \cdot \left| T_k^nf_k -f_k\right|\, d\mu \\
&\leq \int _A\left| T_1^n f_1-f_1\right| \, d\mu +\int _A\left| T_2^n f_2-f_2\right| \, d\mu +\cdots +\int _A \left| T_k^nf_k -f_k\right|\, d\mu \\
&\leq \left\| T_1^n f_1-f_1\right\| _2+\left\| T_2^n f_2-f_2\right\| _2+\cdots +\left\| T_k^nf_k -f_k\right\| _2<k\delta <\varepsilon .
\end{align*}
Thus for $n \in S$, we have
\[ \int_A \prod _{i=1}^kT_i^n f_i\, d\mu \geq \int _A\prod _{i=1}^kf_i \, d \mu - \varepsilon \geq c_0.\]
Hence 
\[ \liminf_{N - M \rightarrow \infty} \frac{1}{N -M} \sum_{n=M}^{N-1} \mu(A \cap T_1^{-n} A \cap \cdots \cap T_k^{-n} A) \geq {d}_{*}(S) \cdot c_0> 0,\]
where $d_{*}(S)= \liminf_{N-M \rightarrow \infty} \frac{1}{N-M} \# \{ n\in S \cap \{M, M+1, \cdots, N-1\} \}$. 
By a similar argument of the proof of Proposition~\ref{functional_kmr_disjoint_general},
%As we did in the proof of Proposition \ref{functional_kmr_disjoint_general}, 
we can conclude the proof.
\end{proof}

\begin{proof}[Proof of Theorem \ref{metric_recurrence:new}] 
Let $\Delta \subset X^{k}$ be the set of diagonal points of $X^{k}$. 
Define $\iota \colon X \to \Delta $ by $x\mapsto \bar{x} = (x,\dots ,x)\in \Delta $, and $\nu =\iota _*\mu $. 
Set $F=T_1 \times \dots \times T_k\colon X^{k}\to X^{k}$, and $D\left( (x_i),(y_i)\right) =\max _{1 \leq i \le k}d(x_i,y_i)$ as a metric on $X^{k}$. 
It is enough to show that there exists a subset $Y\subset \Delta $ with $\nu (\Delta \setminus Y)=0$ such that 
\[
\liminf _{n\to \infty }D\left( \bar{x} ,F^n(\bar{x} )\right) =0
\]
for every $\bar{x} \in Y$. 

For a given $n\in \mathbb{N}$ consider a cover $\mathcal{B} =\{ B(x,1/n)\subset X\colon x\in X \} $ of $X$, where $B(x,r)=\{ y\in X\colon d(x,y)<r\} $.
We can choose a countable subcover $\mathcal{C} =\{ B(x_j,1/n)\in \mathcal{B} \} _{j\in \mathbb{N} }$ of $X$ since a separable metric space is Lindel\"of, i.e., every open cover has a countable subcover.
Thus it holds that
\[
\Delta =\bigcup _{j\in \mathbb{N} }\iota \left( B\left( x_j,1/n\right) \right) .
\]
For notational simplicity, we set $\Delta (x,r)=\iota \left( B(x,r)\right) \subset \Delta $ in the rest of the proof. 
For each $n\in \mathbb{N} $, let
\[
\Delta _n=\bigcup _{j\in \mathbb{N} }\left( \Delta \left( x_j,1/n\right) \setminus \bigcap _{l\in \mathbb{N} }\bigcup _{i\geq l}F^{-i}\left( B\left( x_j,1/n\right) ^{k}\right) \right) \subset \Delta ,
\]
where $B^{k}=B\times \dots \times B\subset X^{k}$. 
Here we have $\nu (\Delta _n)=0$ for every $n\in \mathbb{N}$. 
To see this, given $B\subset X$, let 
\[
B^*=\iota (B)\setminus \bigcap _{l\in \mathbb{N} }\bigcup _{i\geq l}F^{-i}(B^k) 
= \bigcup_{l \in \mathbb{N}} B_l^*,
\]
where $B_l^* = \iota(B) \bigcap \left( \bigcap_{i \geq l} (X^k \setminus F^{-i}(B^k)) \right)$. Let $B_l = \iota^{-1} (B_l^*)$. 
Note that 
\begin{align*}
B_l &= B \bigcap  \left(\bigcap_{i \geq l} \iota^{-1} (X^k \setminus F^{-i}(B^k)) \right)
      = B \bigcap \left( \bigcap_{i \geq l} X \setminus (T_1^{-i} B \cap \cdots \cap T_k^{-i} B)\right)  \\
      &= \bigcap_{i \geq l} \left( B \setminus (T_1^{-i} B \cap \cdots \cap T_k^{-i} B) \right).
\end{align*}
Thus, for $n \geq l$, $B_l \cap T_1^{-n} B_l \cap \cdots \cap T_k^{-n} B_l = \emptyset.$
Then by Theorem \ref{recurrence:new} for mutually disjoint transformations $T_1,\dots ,T_k$ on $(X,\mu )$ one has $\mu (B_l)=0$, and so $\nu (B_l^*)=0$. Therefore $\nu(B^*) =0$. 
It follows that 
\[
\nu \left( \Delta \left( x_j,1/n\right) \setminus \bigcap _{l\in \mathbb{N} }\bigcup _{i\geq l}F^{-i}\left( B\left( x_j,1/n\right) ^{k} \right) \right) =0
\]
for every $j\in \mathbb{N}$, and thus $\nu (\Delta _n)=0$ for every $n\in \mathbb{N}$. 
For $Y=\Delta \setminus \cup _{n\in \mathbb{N} }\Delta _n$, the result follows. 
\end{proof}
\section*{Acknowledgments} 
The authors wish to thank the anonymous referee for the valuable comments and suggestions. 
MH was supported by Japan Society for the Promotion of Science (JSPS) KAKENHI Grant Number 19K03558. 
DK was supported by the National Research Foundation of Korea (NRF-2018R1A2B6001624). 
YS was supported by the National Research Foundation of Korea (NRF-2020R1A2C1A01005446).
\begin{bibdiv}
\begin{biblist}

\bib{Austin2010}{article}{
   author={Austin, Tim},
   title={On the norm convergence of non-conventional ergodic averages},
   journal={Ergodic Theory Dynam. Systems},
   volume={30},
   date={2010},
   number={2},
   pages={321--338},
   issn={0143-3857},
   review={\MR{2599882}},
   doi={10.1017/S014338570900011X},
}

\bib{Berend1985}{article}{
   author={Berend, Daniel},
   title={Joint ergodicity and mixing},
   journal={J. Analyse Math.},
   volume={45},
   date={1985},
   pages={255--284},
   issn={0021-7670},
   review={\MR{833414}},
   doi={10.1007/BF02792552},
}

\bib{Berend-Bergelson1984}{article}{
   author={Berend, Daniel},
   author={Bergelson, Vitaly},
   title={Jointly ergodic measure-preserving transformations},
   journal={Israel J. Math.},
   volume={49},
   date={1984},
   number={4},
   pages={307--314},
   issn={0021-2172},
   review={\MR{788255}},
   doi={10.1007/BF02760955},
}

\bib{Berend-Bergelson1986}{article}{
   author={Berend, Daniel},
   author={Bergelson, Vitaly},
   title={Characterization of joint ergodicity for noncommuting
   transformations},
   journal={Israel J. Math.},
   volume={56},
   date={1986},
   number={1},
   pages={123--128},
   issn={0021-2172},
   review={\MR{879919}},
   doi={10.1007/BF02776245},
}

\bib{Bergelson-Host-Kra2005}{article}{
   author={Bergelson, Vitaly},
   author={Host, Bernard},
   author={Kra, Bryna},
   title={Multiple recurrence and nilsequences},
   note={With an appendix by Imre Ruzsa},
   journal={Invent. Math.},
   volume={160},
   date={2005},
   number={2},
   pages={261--303},
   issn={0020-9910},
   review={\MR{2138068}},
   doi={10.1007/s00222-004-0428-6},
}

\bib{Bergelson-Leibman2002}{article}{
   author={Bergelson, V.},
   author={Leibman, A.},
   title={A nilpotent Roth theorem},
   journal={Invent. Math.},
   volume={147},
   date={2002},
   number={2},
   pages={429--470},
   issn={0020-9910},
   review={\MR{1881925}},
   doi={10.1007/s002220100179},
}

\bib{Chu2011}{article}{
   author={Chu, Qing},
   title={Multiple recurrence for two commuting transformations},
   journal={Ergodic Theory Dynam. Systems},
   volume={31},
   date={2011},
   number={3},
   pages={771--792},
   issn={0143-3857},
   review={\MR{2794947}},
   doi={10.1017/S0143385710000258},
}

\bib{Donoso-Sun2018}{article}{
   author={Donoso, Sebasti\'{a}n},
   author={Sun, Wenbo},
   title={Quantitative multiple recurrence for two and three
   transformations},
   journal={Israel J. Math.},
   volume={226},
   date={2018},
   number={1},
   pages={71--85},
   issn={0021-2172},
   review={\MR{3819687}},
   doi={10.1007/s11856-018-1690-4},
}

\bib{Furstenberg1967}{article}{
   author={Furstenberg, Harry},
   title={Disjointness in ergodic theory, minimal sets, and a problem in
   Diophantine approximation},
   journal={Math. Systems Theory},
   volume={1},
   date={1967},
   pages={1--49},
   issn={0025-5661},
   review={\MR{0213508}},
   doi={10.1007/BF01692494},
}

\bib{Furstenberg1977}{article}{
   author={Furstenberg, Harry},
   title={Ergodic behavior of diagonal measures and a theorem of Szemer\'{e}di
   on arithmetic progressions},
   journal={J. Analyse Math.},
   volume={31},
   date={1977},
   pages={204--256},
   issn={0021-7670},
   review={\MR{0498471}},
   doi={10.1007/BF02813304},
}

\bib{Furstenberg1981}{book}{
   author={Furstenberg, H.},
   title={Recurrence in ergodic theory and combinatorial number theory},
   note={M. B. Porter Lectures},
   publisher={Princeton University Press, Princeton, N.J.},
   date={1981},
   pages={xi+203},
   isbn={0-691-08269-3},
   review={\MR{603625}},
}

\bib{Furstenberg-Katznelson1978}{article}{
   author={Furstenberg, H.},
   author={Katznelson, Y.},
   title={An ergodic Szemer\'{e}di theorem for commuting transformations},
   journal={J. Analyse Math.},
   volume={34},
   date={1978},
   pages={275--291 (1979)},
   issn={0021-7670},
   review={\MR{531279}},
   doi={10.1007/BF02790016},
}

\bib{Furstenberg-Katznelson-Ornstein1982}{article}{
   author={Furstenberg, H.},
   author={Katznelson, Y.},
   author={Ornstein, D.},
   title={The ergodic theoretical proof of Szemer\'{e}di's theorem},
   journal={Bull. Amer. Math. Soc. (N.S.)},
   volume={7},
   date={1982},
   number={3},
   pages={527--552},
   issn={0273-0979},
   review={\MR{670131}},
   doi={10.1090/S0273-0979-1982-15052-2},
}

\bib{Hirayama2019}{article}{
   author={Hirayama, Michihiro},
   title={Bounds for multiple recurrence rate and dimension},
   journal={Tokyo J. Math.},
   volume={42},
   date={2019},
   number={1},
   pages={239--253}, 
   issn={0387-3870},
   review={\MR{3982057}},
}

\bib{Host2009}{article}{
   author={Host, Bernard},
   title={Ergodic seminorms for commuting transformations and applications},
   journal={Studia Math.},
   volume={195},
   date={2009},
   number={1},
   pages={31--49},
   issn={0039-3223},
   review={\MR{2539560}},
   doi={10.4064/sm195-1-3},
}

\bib{Host-Kra2005}{article}{
   author={Host, Bernard},
   author={Kra, Bryna},
   title={Nonconventional ergodic averages and nilmanifolds},
   journal={Ann. of Math. (2)},
   volume={161},
   date={2005},
   number={1},
   pages={397--488},
   issn={0003-486X},
   review={\MR{2150389}},
   doi={10.4007/annals.2005.161.397},
}

\bib{Khintchine1935}{article}{
   author={Khintchine, A.},
   title={Eine Versch\"{a}rfung des Poincar\'{e}schen ``Wiederkehrsatzes''},
   language={German},
   journal={Compositio Math.},
   volume={1},
   date={1935},
   pages={177--179},
   issn={0010-437X},
   review={\MR{1556883}},
}

\bib{Kim2009}{article}{
   author={Kim, Dong Han},
   title={Quantitative recurrence properties for group actions},
   journal={Nonlinearity},
   volume={22},
   date={2009},
   number={1},
   pages={1--9},
   issn={0951-7715},
   review={\MR{2470261}},
   doi={10.1088/0951-7715/22/1/001},
}

\bib{Krengel1985}{book}{
   author={Krengel, Ulrich},
   title={Ergodic theorems},
   series={De Gruyter Studies in Mathematics},
   volume={6},
   note={With a supplement by Antoine Brunel},
   publisher={Walter de Gruyter \& Co., Berlin},
   date={1985},
   pages={viii+357},
   isbn={3-11-008478-3},
   review={\MR{797411}},
}

\bib{Lesigne-Rittaud-delaRue2003}{article}{
   author={Lesigne, E.},
   author={Rittaud, B.},
   author={de la Rue, T.},
   title={Weak disjointness of measure-preserving dynamical systems},
   journal={Ergodic Theory Dynam. Systems},
   volume={23},
   date={2003},
   number={4},
   pages={1173--1198},
   issn={0143-3857},
   review={\MR{1997972}},
   doi={10.1017/S0143385702001505},
}

\bib{Rudin1987}{book}{
   author={Rudin, Walter},
   title={Real and complex analysis},
   edition={3},
   publisher={McGraw-Hill Book Co., New York},
   date={1987},
   pages={xiv+416},
   isbn={0-07-054234-1},
   review={\MR{924157}},
}

\bib{Rudolph1990}{book}{
   author={Rudolph, Daniel J.},
   title={Fundamentals of measurable dynamics},
   series={Oxford Science Publications},
   note={Ergodic theory on Lebesgue spaces},
   publisher={The Clarendon Press, Oxford University Press, New York},
   date={1990},
   pages={x+168},
   isbn={0-19-853572-4},
   review={\MR{1086631}},
}

\bib{delaRue2012}{article}{
   author={de la Rue, Thierry},
   title={Joinings in ergodic theory},
   conference={
      title={Mathematics of complexity and dynamical systems. Vols. 1--3},
   },
   book={
      publisher={Springer, New York},
   },
   date={2012},
   pages={796--809},
   review={\MR{3220709}},
%  doi={10.1007/978-1-4614-1806-1_49},
}

\bib{Tao2008}{article}{
   author={Tao, Terence},
   title={Norm convergence of multiple ergodic averages for commuting
   transformations},
   journal={Ergodic Theory Dynam. Systems},
   volume={28},
   date={2008},
   number={2},
   pages={657--688},
   issn={0143-3857},
   review={\MR{2408398}},
   doi={10.1017/S0143385708000011},
}

\bib{Towsner2009}{article}{
   author={Towsner, Henry},
   title={Convergence of diagonal ergodic averages},
   journal={Ergodic Theory Dynam. Systems},
   volume={29},
   date={2009},
   number={4},
   pages={1309--1326},
   issn={0143-3857},
   review={\MR{2529651}},
   doi={10.1017/S0143385708000722},
}

\bib{Walsh2012}{article}{
   author={Walsh, Miguel N.},
   title={Norm convergence of nilpotent ergodic averages},
   journal={Ann. of Math. (2)},
   volume={175},
   date={2012},
   number={3},
   pages={1667--1688},
   issn={0003-486X},
   review={\MR{2912715}},
   doi={10.4007/annals.2012.175.3.15},
}

\bib{Walters1982}{book}{
   author={Walters, Peter},
   title={An introduction to ergodic theory},
   series={Graduate Texts in Mathematics},
   volume={79},
   publisher={Springer-Verlag, New York-Berlin},
   date={1982},
   pages={ix+250},
   isbn={0-387-90599-5},
   review={\MR{648108}},
}

\bib{Ziegler2007}{article}{
   author={Ziegler, Tamar},
   title={Universal characteristic factors and Furstenberg averages},
   journal={J. Amer. Math. Soc.},
   volume={20},
   date={2007},
   number={1},
   pages={53--97},
   issn={0894-0347},
   review={\MR{2257397}},
   doi={10.1090/S0894-0347-06-00532-7},
}

\end{biblist}
\end{bibdiv} 

\end{document}